\numberwithin{equation}{section}
\theoremstyle{plain}
\newtheorem{thm}{\protect\theoremname}[section]
\theoremstyle{plain}
\newtheorem{cor}{\protect\corollaryname}[section]
\theoremstyle{plain}
\newtheorem{lem}{\protect\lemmaname}[section]
\providecommand{\corollaryname}{Corollary}
\providecommand{\lemmaname}{Lemma}
\providecommand{\theoremname}{Theorem}
\begin{document}
\title[Hecke-type identities]{Hecke-type identities associated with definite quadratic forms}
\author{Bing He}
\address{School of Mathematics and Statistics, Central South University \\
Changsha 410083, Hunan, People's Republic of China}
\email{yuhe001@foxmail.com; yuhelingyun@foxmail.com}
\keywords{Hecke-type identity; definite quadratic form; $q$-transformation
formula; partition function}
\subjclass[2000]{05A30; 33D15; 11E25; 11P81}
\begin{abstract}
Since the study by Jacobi and Hecke, Hecke-type series have received
a lot of attention. Unlike such series associated with indefinite
quadratic forms, identities on Hecke-type series associated with definite
quadratic forms are quite rare in the literature. Motivated by the
works of Liu, we first establish many parameterized identities with
two parameters by employing different $q$-transformation formulas
and then deduce various Hecke-type identities associated with definite
quadratic forms by specializing the choice of these two parameters.
As applications, we utilize some of these Hecke-type identities to
establish families of inequalities for several partition functions.
Our proofs heavily rely on some formulas from the work of Zhi-Guo
Liu \cite{L6}.
\end{abstract}

\maketitle

\section{Introduction}

Hecke-type series are of the type 
\[
\sum_{(m,n)\in D}(-1)^{H(m,n)}q^{Q(m,n)+L(m,n)},
\]
where $H$ and $L$ are linear forms, $Q$ is a quadratic form and
$D$ is some subset of $\mathbb{Z}\times\mathbb{Z}$ such that $Q(m,n)\geq0$
for each $(m,n)\in D.$ We allow $Q(m,n)$ to be definite here although
$Q(m,n)$ was assumed to be indefinite historically. The following
classical identity, which expresses an infinite product as a Hecke-type
series, is due to Jacobi \cite[(3.15)]{A84}: 
\[
(q;q)_{\infty}^{3}=\sum_{n=-\infty}^{\infty}\sum_{m\geq|n|}(-1)^{m}q^{m(m+1)/2},
\]
where 
\[
(q;q)_{\infty}:=\prod_{k=1}^{\infty}(1-q^{k}).
\]
Here and in the sequel, $|q|<1$ is assumed. Motivated by this identity,
Hecke \cite{H} investigated theta series relating indefinite quadratic
forms. In particular, Hecke \cite[p. 425]{H} presented the following
identity, which is due to Rogers \cite[p. 323]{R}:
\[
(q;q)_{\infty}^{2}=\sum_{n=-\infty}^{\infty}\sum_{|m|\leq n/2}(-1)^{m+n}q^{(n^{2}-3m^{2})/2+(m+n)/2}.
\]

Since the study by Hecke, this type of identities have attracted broad
interest among many mathematicians. For instance, Kac and Peterson
\cite{KP} showed ways to prove Hecke-type identities by using affine
Lie algebra. Andrews \cite{A86,A88} linked them to the fifth and
seventh order mock theta functions. In \cite{Hi} Hickerson applied
Hecke-type identities in his beautiful proof of the mock theta conjecture.
In addition, Zwegers \cite{Z} developed a theory of transformation
formulas for Hecke-type identities arising from the mock theta functions.
However, unlike such series associated with indefinite quadratic forms,
results on Hecke-type identities associated with definite quadratic
forms are quite few in the literature. Using a $q$-transformation
formula, Liu \cite[(4.12)--(4.14)]{L6}\footnote{The factor $(-1)^{n}$ should be deleted on the right-hand side of
\cite[(4.12) and (4.14)]{L6} while the factor $(-1)^{n}$ is missing
on the right-hand side of \cite[(4.13)]{L6}.} established three interesting Hecke-type identities associated with
definite quadratic forms:
\begin{align*}
 & \text{\ensuremath{\sum_{n=0}^{\infty}\frac{q^{n^{2}}}{(q^{2};q^{2})_{n}}}}=\frac{1}{(q;q)_{\infty}}\sum_{n=0}^{\infty}\sum_{j=-n}^{n}(-1)^{j}(1-q^{2n+1})q^{n^{2}+j^{2}},\\
 & \sum_{n=0}^{\infty}\frac{(-1)^{n}q^{n(n-1)/2}}{(-q;q)_{n}}=\sum_{n=0}^{\infty}\sum_{j=-n}^{n}(-1)^{n+j}(1-q^{2n+1})q^{n(n-1)/2+j^{2}},\\
 & \sum_{n=0}^{\infty}\frac{(-1)^{n}q^{n(n-1)/2}}{(q;q)_{n}}=\frac{(-q;q)_{\infty}}{(q;q)_{\infty}}\sum_{n=0}^{\infty}\sum_{j=-n}^{n}(-1)^{j}(1-q^{2n+1})q^{n(n-1)/2+j^{2}},
\end{align*}
where 
\[
(a;q)_{n}:=\prod_{k=0}^{n-1}(1-aq^{k}).
\]
In \cite{WY} Wang and Yee found one identity of this type:
\begin{equation}
\sum_{n=1}^{\infty}\frac{q^{n}(q;q^{2})_{n}}{(1+q^{2n})(-q;q^{2})_{n}}=\sum_{n=0}^{\infty}\sum_{j=-n+1}^{n}(-1)^{j}q^{n^{2}+j^{2}}.\label{eq:6-1}
\end{equation}
Two similar identities 
\begin{align*}
1+2\sum_{n=1}^{\infty}\frac{q^{n^{2}+n}}{(1+q^{n})(q;q)_{n}} & =\frac{1}{(q;q)_{\infty}}\sum_{n=0}^{\infty}\sum_{j=-n}^{n}(-1)^{j}q^{(3n^{2}+n)/2+j^{2}},\\
1+2\sum_{n=1}^{\infty}\frac{(-1)^{n}q^{n(n+1)/2}}{1+q^{n}} & =\sum_{n=0}^{\infty}\sum_{j=-n}^{n}(-1)^{n+j}(1-q^{2n+1})q^{n^{2}+j^{2}}
\end{align*}
were recently discovered by Chan and Liu \cite[(4.8) and (4.9)]{CL}.
To the best of our knowledge, these identities may be the only six
Hecke-type identities associated with definite quadratic forms in
the literature. Finding such identities becomes an interesting and
important topic.

In \cite{L5,L1,L6}, using two general $q$-transformation formulas,
Liu discovered many amusing Hecke-type identities although most of
his identities are only associated with indefinite quadratic forms.
Motivated by the works of Liu, we shall present a lot of Hecke-type
identities associated with definite quadratic forms. Namely, we first
set up various parameterized identities with two parameters by using
different $q$-transformation formulas and then derive many Hecke-type
identities associated with definite quadratic forms by specializing
the choice of these two parameters.

The rest of this paper is organized as follows. In Section \ref{sec:Pre}
we recall some formulas from the theory of basic hypergeometric series.
In Section \ref{sec:3} we establish many $(a,b)$-parameterized identities
and then employ these parameterized identities to deduce various Hecke-type
identities associated with definite quadratic forms. For example,
\begin{align*}
\sum_{n=1}^{\infty}\frac{q^{n}}{1+q^{2n}} & =\sum_{n=1}^{\infty}\sum_{j=-n+1}^{n}q^{n^{2}+j^{2}},\\
\sum_{n=1}^{\infty}\frac{(-1)^{n}q^{n^{2}+n}}{(1+q^{2n})(q;q^{2})_{n}} & =\sum_{n=1}^{\infty}\sum_{j=-n+1}^{n}(-1)^{n}q^{2n^{2}+j^{2}}.
\end{align*}
Both of these two identities are associated with definite quadratic
forms $n^{2}+j^{2}$ and $2n^{2}+j^{2}$ respectively.

Finally, as applications, we use several Hecke-type identities in
Section \ref{sec:3} to establish inequalities for certain partition
functions in the last section. For example,
\begin{align*}
\sum_{n=0}^{\left\lfloor \sqrt{N/2}\right\rfloor }\sum_{j=-n}^{n+1}(-1)^{n}\bigg(\overline{p}\bigg(\frac{N-3n-j^{2}}{2}-n^{2}\bigg)-\overline{p}\bigg(\frac{N-5n-j^{2}-2}{2}-n^{2}\bigg)\bigg) & \geq0,\\
\sum_{n=0}^{\left\lfloor \sqrt{N/2}\right\rfloor }\sum_{j=-n}^{n}(-1)^{j}\mathrm{(pod}(N-2n^{2}-n-j^{2})-\mathrm{pod}(N-2n^{2}-3n-j^{2}-1)) & \geq0,
\end{align*}
where $\overline{p}(n)$ and $\mathrm{pod}(n)$ denote the number
of overpartitions of $n$ and the number of partitions of $n$ without
repeated odd parts respectively.

It should be mentioned that the core tools for our proofs are these
formulas from Liu's work \cite{L6}.

\section{\label{sec:Pre} Preliminaries}

In this section we collect some useful formulas or identities on basic
hypergeometric series.

Throughout this paper we adopt the standard $q$-notations: the $q$-shifted
factorials are defined as
\[
(a;q)_{0}:=1,\;(a;q)_{n}:=\prod_{k=0}^{n-1}(1-aq^{k}),\;(a;q)_{\infty}:=\prod_{k=0}^{\infty}(1-aq^{k}).
\]
The following compact notation for multiple $q$-shifted factorials
is also used:
\[
(a_{1},a_{2},\cdots,a_{m};q)_{n}:=(a_{1};q)_{n}(a_{2};q)_{n}\cdots(a_{m};q)_{n},
\]
where $n$ is an integer or $\infty$.

The basic hypergeometric series $_{r}\phi_{s}$ is defined by \cite[(1.2.22)]{GR}
\[
_{r}\phi_{s}\left(\begin{matrix}a_{1},a_{2},\cdots,a_{r}\\
b_{1},b_{2},\cdots,b_{s}
\end{matrix};q,z\right):=\sum_{n=0}^{\infty}\frac{(a_{1},a_{2},\cdots,a_{r};q)_{n}}{(q,b_{1},b_{2},\cdots,b_{s};q)_{n}}((-1)^{n}q^{n(n-1)/2})^{1+s-r}z^{n}.
\]

Liu \cite[Theorem 4.3]{L6} established the following $q$-transformation
formula by using a general $q$-transformation formula: for $\max\{|\alpha a|,|\alpha b|,|\alpha abz/q|\}<1,$
we have

\begin{equation}
\begin{aligned} & \frac{(\alpha q,\alpha ab/q;q)_{\infty}}{(\alpha a,\alpha b;q)_{\infty}}\text{}_{4}\phi_{3}\left(\begin{matrix}q/a,q/b,\beta,\gamma\\
c,d,h
\end{matrix};q,\frac{\alpha abz}{q}\right)\\
 & =\sum_{n=0}^{\infty}\frac{(1-\alpha q^{2n})(\alpha,q/a,q/b;q)_{n}(-\alpha ab/q)^{n}q^{{n \choose 2}}}{(1-\alpha)(q,\alpha a,\alpha b;q)_{n}}\text{}_{4}\phi_{3}\left(\begin{matrix}q^{-n},\alpha q^{n},\beta,\gamma\\
c,d,h
\end{matrix};q,qz\right).
\end{aligned}
\label{eq:2-1}
\end{equation}

If $\gamma=h,$ then \eqref{eq:2-1} becomes the following formula:
\begin{equation}
\begin{aligned} & \frac{(\alpha q,\alpha ab/q;q)_{\infty}}{(\alpha a,\alpha b;q)_{\infty}}\text{}_{3}\phi_{2}\left(\begin{matrix}q/a,q/b,\beta\\
c,d
\end{matrix};q,\frac{\alpha abz}{q}\right)\\
 & =\sum_{n=0}^{\infty}\frac{(1-\alpha q^{2n})(\alpha,q/a,q/b;q)_{n}(-\alpha ab/q)^{n}q^{{n \choose 2}}}{(1-\alpha)(q,\alpha a,\alpha b;q)_{n}}\text{}_{3}\phi_{2}\left(\begin{matrix}q^{-n},\alpha q^{n},\beta\\
c,d
\end{matrix};q,qz\right),
\end{aligned}
\label{eq:t2-4}
\end{equation}
where $\max\{|\alpha a|,|\alpha b|,|\alpha abz/q|\}<1.$

When $z=1$ the formula \eqref{eq:t2-4} reduces to the following
identity:

\begin{equation}
\begin{aligned} & \frac{(\alpha q,\alpha ab/q;q)_{\infty}}{(\alpha a,\alpha b;q)_{\infty}}\text{}_{3}\phi_{2}\left(\begin{matrix}q/a,q/b,\beta\\
c,d
\end{matrix};q,\frac{\alpha ab}{q}\right)\\
 & =\sum_{n=0}^{\infty}\frac{(1-\alpha q^{2n})(\alpha,q/a,q/b;q)_{n}(-\alpha ab/q)^{n}q^{{n \choose 2}}}{(1-\alpha)(q,\alpha a,\alpha b;q)_{n}}\text{}_{3}\phi_{2}\left(\begin{matrix}q^{-n},\alpha q^{n},\beta\\
c,d
\end{matrix};q,q\right),
\end{aligned}
\label{eq:1-3}
\end{equation}
where $\max\{|\alpha a|,|\alpha b|,|\alpha ab/q|\}<1.$ This formula
can also be obtained by replacing $(\alpha c,\alpha d)$ by $(c,d)$
in \cite[Theorem 1.7]{L5}.

Setting $\beta=d$ in \eqref{eq:t2-4} we obtain the following $q$-transformation
formula: for $\max\{|\alpha a|,|\alpha b|,|\alpha abz/q|\}<1,$ we
have
\begin{equation}
\begin{aligned} & \frac{(\alpha q,\alpha ab/q;q)_{\infty}}{(\alpha a,\alpha b;q)_{\infty}}\text{}_{2}\phi_{1}\left(\begin{matrix}q/a,q/b\\
c
\end{matrix};q,\frac{\alpha abz}{q}\right)\\
 & =\sum_{n=0}^{\infty}\frac{(1-\alpha q^{2n})(\alpha,q/a,q/b;q)_{n}(-\alpha ab/q)^{n}q^{{n \choose 2}}}{(1-\alpha)(q,\alpha a,\alpha b;q)_{n}}\text{}_{2}\phi_{1}\left(\begin{matrix}q^{-n},\alpha q^{n}\\
c
\end{matrix};q,qz\right).
\end{aligned}
\label{eq:t2-9}
\end{equation}

From \cite[(3.2.5) and  (3.2.6)]{GR} we have
\begin{equation}
\text{}_{3}\phi_{2}\left(\begin{matrix}q^{-n},a,b\\
d,e
\end{matrix};q,\frac{deq^{n}}{ab}\right)=\frac{(e/a;q)_{n}}{(e;q)_{n}}\text{}_{3}\phi_{2}\left(\begin{matrix}q^{-n},a,d/b\\
d,aq^{1-n}/e
\end{matrix};q,q\right),\label{eq:1-1}
\end{equation}
and
\begin{equation}
\begin{aligned} & \text{}_{3}\phi_{2}\left(\begin{matrix}q^{-n},aq^{n},b\\
d,e
\end{matrix};q,\frac{de}{ab}\right)\\
 & =\frac{(aq/d,aq/e;q)_{n}}{(d,e;q)_{n}}\bigg(\frac{de}{aq}\bigg)^{n}\text{}_{3}\phi_{2}\left(\begin{matrix}q^{-n},aq^{n},abq/de\\
aq/d,aq/e
\end{matrix};q,\frac{q}{b}\right).
\end{aligned}
\label{eq:7-1}
\end{equation}

From \cite[Propositions 2.4 and 2.5]{L6}\footnote{The factor $(-1)^{n}$ is missing on the left-hand side of \cite[Propositions 2.4 and 2.5]{L6}.}
we find that

\begin{equation}
\begin{aligned} & (-1)^{n}\frac{(\alpha q;q)_{n}}{(q;q)_{n}}q^{{n+1 \choose 2}}\text{}_{3}\phi_{2}\left(\begin{matrix}q^{-n},\alpha q^{n+1},\alpha cd/q\\
\alpha c,\alpha d
\end{matrix};q,1\right)\\
 & =\sum_{j=0}^{n}(-1)^{j}\frac{(1-\alpha q^{2j})(\alpha,q/c,q/d;q)_{j}}{(1-\alpha)(q,\alpha c,\alpha d;q)_{j}}q^{j(j-3)/2}(\alpha cd)^{j}
\end{aligned}
\label{eq:1-2}
\end{equation}
and
\begin{equation}
\begin{aligned} & (-1)^{n}\frac{(\alpha q;q)_{n}}{(q;q)_{n}}q^{{n+1 \choose 2}}\text{}_{2}\phi_{1}\left(\begin{matrix}q^{-n},\alpha q^{n+1}\\
\alpha c
\end{matrix};q,1\right)\\
 & =\sum_{j=0}^{n}\frac{(1-\alpha q^{2j})(\alpha,q/c;q)_{j}}{(1-\alpha)(q,\alpha c;q)_{j}}q^{j^{2}-j}(\alpha c)^{j}.
\end{aligned}
\label{eq:7-12}
\end{equation}

\section{\label{sec:3} $(a,b)$-Parameterized identities and Hecke-type identities
associated with definite quadratic forms}

In this section we establish many Hecke-type identities associated
with definite quadratic forms.
\begin{thm}
\label{t} For $\max\{|aq^{2}|,|bq^{2}|,|ab|\}<1,$ we have
\begin{equation}
\begin{aligned} & \frac{(q^{2},ab;q^{2})_{\infty}}{(aq^{2},bq^{2};q^{2})_{\infty}}\text{}_{3}\phi_{2}\left(\begin{matrix}q^{2}/a,q^{2}/b,-1\\
q,-q^{2}
\end{matrix};q^{2},ab\right)\\
 & =\sum_{n=0}^{\infty}\sum_{j=-n}^{n}\frac{(1-q^{4n+2})(q^{2}/a,q^{2}/b;q^{2})_{n}(ab)^{n}}{(aq^{2},bq^{2};q^{2})_{n}}q^{n(n-1)+j^{2}}.
\end{aligned}
\label{eq:7-15}
\end{equation}
\end{thm}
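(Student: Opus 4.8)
The plan is to obtain \eqref{eq:7-15} as a specialization of the $z=1$ transformation \eqref{eq:1-3}, the double sum on the right arising from an explicit evaluation of the inner ${}_3\phi_2$. First I would replace $q$ by $q^2$ throughout \eqref{eq:1-3} and then take $\alpha=q^2$, $\beta=-1$, $c=q$ and $d=-q^2$, keeping $a$ and $b$. Then $\alpha ab/q^2=ab$, so the outer ${}_3\phi_2$ is precisely the one on the left of \eqref{eq:7-15}, while the outer product becomes $(q^4,ab;q^2)_\infty/(aq^2,bq^2;q^2)_\infty$. On the right the factor $(\alpha;q^2)_n=(q^2;q^2)_n$ cancels the $(q^2;q^2)_n$ in the denominator, the exponent collapses to $q^{n(n-1)}$, $-\alpha ab/q^2=-ab$, and each summand also carries a factor $1/(1-q^2)$. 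Since $(q^4;q^2)_\infty=(q^2;q^2)_\infty/(1-q^2)$, the factors $1/(1-q^2)$ on the two sides cancel, and the whole theorem reduces to the single termwise identity
\[
(-1)^n\,{}_3\phi_2\!\left(\begin{matrix}q^{-2n},q^{2n+2},-1\\ q,-q^2\end{matrix};q^2,q^2\right)=\sum_{j=-n}^{n}q^{j^2},
\]
because then $(-ab)^n(-1)^n=(ab)^n$ restores the correct coefficient.

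To prove this termwise identity I would first change the argument of the inner series from $q^2$ to $1$. Applying \eqref{eq:1-1} with $q$ replaced by $q^2$ and, in its own notation, $a=q^{2n+2}$, $b=-q$, $d=q$, $e=-q^2$ (so that the left-hand argument is $1$ and the right-hand series is the one above) yields
\[
{}_3\phi_2\!\left(\begin{matrix}q^{-2n},q^{2n+2},-1\\ q,-q^2\end{matrix};q^2,q^2\right)=\frac{(-q^2;q^2)_n}{(-q^{-2n};q^2)_n}\,{}_3\phi_2\!\left(\begin{matrix}q^{-2n},q^{2n+2},-q\\ q,-q^2\end{matrix};q^2,1\right).
\]
A short computation gives $(-q^{-2n};q^2)_n=q^{-n(n+1)}(-q^2;q^2)_n$, so the prefactor is $q^{n(n+1)}$. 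I would then evaluate the argument-$1$ series by \eqref{eq:1-2} with $q$ replaced by $q^2$ and $\alpha=1$, $c=q$, $d=-q^2$; these make the third upper parameter $\alpha cd/q^2=-q$, so the inner series matches exactly, and the left prefactor $(\alpha q^2;q^2)_n/(q^2;q^2)_n$ equals $1$.

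The main obstacle is that the right-hand side of \eqref{eq:1-2} is singular at $\alpha=1$, so this evaluation must be read as the limit $\alpha\to1$; since both sides terminate (hence are rational in $\alpha$) and the left side is regular there, the limit is legitimate. In that limit the $j=0$ term is $1$, while for $j\ge1$ the combination $\frac{(1-\alpha q^{4j})(\alpha;q^2)_j}{(1-\alpha)(q^2;q^2)_j}$ tends to $1+q^{2j}$; using $q^2/c=q$ (so the $c$-factors cancel), $(q^2/d;q^2)_j/(d;q^2)_j=(-1;q^2)_j/(-q^2;q^2)_j=2/(1+q^{2j})$, and $(\alpha cd)^j=(-q^3)^j$, the $j$-th term collapses to exactly $2q^{j^2}$. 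Hence the argument-$1$ series equals $(-1)^nq^{-n(n+1)}\big(1+2\sum_{j=1}^{n}q^{j^2}\big)=(-1)^nq^{-n(n+1)}\sum_{j=-n}^{n}q^{j^2}$; multiplying by the prefactor $q^{n(n+1)}$ cancels the power of $q$ and yields the termwise identity. Feeding this back into the specialized \eqref{eq:1-3} gives \eqref{eq:7-15}. The bookkeeping of the cancelling $1/(1-q^2)$ and $q^{n(n+1)}$ factors, together with the $\alpha\to1$ limit, is where care is needed; the rest is routine.
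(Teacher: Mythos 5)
Your proof is correct and follows essentially the same route as the paper: you specialize \eqref{eq:1-3} with $q\mapsto q^{2}$, $\alpha=q^{2}$, $\beta=-1$, $c=q$, $d=-q^{2}$, and evaluate the inner $_{3}\phi_{2}$ by combining \eqref{eq:1-1} (to pass from argument $q^{2}$ to argument $1$) with the $\alpha=1$ case of \eqref{eq:1-2}, exactly as in the paper's derivation of \eqref{eq:7-14}. You are in fact more explicit than the paper about the $\alpha\to1$ limit in \eqref{eq:1-2} and about the cancellation of the $1/(1-q^{2})$ factors against $(q^{4};q^{2})_{\infty}$, both of which the paper passes over silently.
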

\noindent{\it Proof.} Replacing $(q,a,b,d,e)$ by $(q^{2},q^{2n+2},-q,q,-q^{2})$
in \eqref{eq:1-1} we have
\[
\text{}_{3}\phi_{2}\left(\begin{matrix}q^{-2n},q^{2n+2},-q\\
q,-q^{2}
\end{matrix};q^{2},1\right)=\frac{(-q^{-2n};q^{2})_{n}}{(-q^{2};q^{2})_{n}}\text{}_{3}\phi_{2}\left(\begin{matrix}q^{-2n},q^{2n+2},-1\\
q,-q^{2}
\end{matrix};q^{2},q^{2}\right),
\]
which combines the identity $(-q^{-2n};q^{2})_{n}=q^{-n(n+1)}(-q^{2};q^{2})_{n}$
to arrive at
\begin{equation}
\text{}_{3}\phi_{2}\left(\begin{matrix}q^{-2n},q^{2n+2},-1\\
q,-q^{2}
\end{matrix};q^{2},q^{2}\right)=q^{n(n+1)}\text{}_{3}\phi_{2}\left(\begin{matrix}q^{-2n},q^{2n+2},-q\\
q,-q^{2}
\end{matrix};q^{2},1\right).\label{eq:1-7}
\end{equation}
Replcaing $q$ by $q^{2}$ and then setting $\alpha=1,c=q,d=-q^{2}$
in \eqref{eq:1-2} we get
\begin{equation}
(-1)^{n}q^{n(n+1)}\text{}_{3}\phi_{2}\left(\begin{matrix}q^{-2n},q^{2n+2},-q\\
q,-q^{2}
\end{matrix};q^{2},1\right)=\sum_{j=-n}^{n}q^{j^{2}}.\label{eq:1-6}
\end{equation}
Substituting \eqref{eq:1-6} into the right-hand side of \eqref{eq:1-7}
we conclude that
\begin{equation}
\text{}_{3}\phi_{2}\left(\begin{matrix}q^{-2n},q^{2n+2},-1\\
q,-q^{2}
\end{matrix};q^{2},q^{2}\right)=(-1)^{n}\sum_{j=-n}^{n}q^{j^{2}}.\label{eq:7-14}
\end{equation}
We replace $q$ by $q^{2}$ and take $\alpha=q^{2},\beta=-1,c=q,d=-q^{2}$
in \eqref{eq:1-3} to obtain
\begin{equation}
\begin{aligned} & \frac{(q^{2},ab;q^{2})_{\infty}}{(aq^{2},bq^{2};q^{2})_{\infty}}\text{}_{3}\phi_{2}\left(\begin{matrix}q^{2}/a,q^{2}/b,-1\\
q,-q^{2}
\end{matrix};q^{2},ab\right)\\
 & =\sum_{n=0}^{\infty}\frac{(1-q^{4n+2})(q^{2}/a,q^{2}/b;q^{2})_{n}(-ab)^{n}q^{n(n-1)}}{(aq^{2},bq^{2};q^{2})_{n}}\\
 & \;\;\;\;\;\times\text{}_{3}\phi_{2}\left(\begin{matrix}q^{-2n},q^{2n+2},-1\\
q,-q^{2}
\end{matrix};q^{2},q^{2}\right).
\end{aligned}
\label{eq:1-8}
\end{equation}
Then the identity \eqref{eq:7-15} follows readily by substituting
\eqref{eq:7-14} into the right-hand side of \eqref{eq:1-8}. \qed

We now use \eqref{eq:7-15} to deduce the following double series
identity.
\begin{thm}
\label{t1} We have
\[
\sum_{n=1}^{\infty}\frac{q^{n}}{1+q^{2n}}=\sum_{n=1}^{\infty}\sum_{j=-n}^{n}q^{n^{2}+j^{2}}-\sum_{n=1}^{\infty}q^{2n^{2}}.
\]
\end{thm}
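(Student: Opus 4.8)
The plan is to obtain \eqref{eq:7-15}'s consequence by specializing the two free parameters in Theorem \ref{t} to $a=q$ and $b=1$; this choice satisfies the hypothesis since $\max\{|aq^{2}|,|bq^{2}|,|ab|\}=\max\{|q^{3}|,|q^{2}|,|q|\}=|q|<1$. The reason for this particular specialization is that it simultaneously collapses the infinite product prefactor on the left and forces heavy cancellation inside both hypergeometric expressions, while the bilateral structure $\sum_{j=-n}^{n}q^{j^{2}}$ on the right is left essentially intact, so that the definite quadratic form $n^{2}+j^{2}$ emerges directly.

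First I would simplify the left-hand side. With $a=q,b=1$ the prefactor becomes $\frac{(q^{2},q;q^{2})_{\infty}}{(q^{3},q^{2};q^{2})_{\infty}}=\frac{(q;q^{2})_{\infty}}{(q^{3};q^{2})_{\infty}}=1-q$. For the ${}_{3}\phi_{2}$, the upper parameters are $q^{2}/a=q$ and $q^{2}/b=q^{2}$ and the lower ones are $q$ and $-q^{2}$, so the factors $(q;q^{2})_{k}$ and $(q^{2};q^{2})_{k}$ cancel between numerator and denominator, leaving $\sum_{k\ge0}\frac{(-1;q^{2})_{k}}{(-q^{2};q^{2})_{k}}q^{k}$. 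Using $(-1;q^{2})_{k}/(-q^{2};q^{2})_{k}=2/(1+q^{2k})$ for $k\ge1$ (and $1$ for $k=0$), the left-hand side of \eqref{eq:7-15} becomes $(1-q)\bigl(1+2\sum_{k\ge1}\frac{q^{k}}{1+q^{2k}}\bigr)$, which already displays the target Lambert-type series.

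Next I would simplify the summand on the right. Writing $C_{n}$ for the coefficient $\frac{(1-q^{4n+2})(q^{2}/a,q^{2}/b;q^{2})_{n}(ab)^{n}}{(aq^{2},bq^{2};q^{2})_{n}}$, the substitution gives $C_{n}=\frac{(1-q^{4n+2})(q;q^{2})_{n}q^{n}}{(q^{3};q^{2})_{n}}$; then $1-q^{4n+2}=(1-q^{2n+1})(1+q^{2n+1})$ together with the telescoping $(q;q^{2})_{n}/(q^{3};q^{2})_{n}=(1-q)/(1-q^{2n+1})$ yields $C_{n}=(1-q)(q^{n}+q^{3n+1})$, hence $C_{n}q^{n(n-1)}=(1-q)(q^{n^{2}}+q^{(n+1)^{2}})$. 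Thus the right-hand side of \eqref{eq:7-15} equals $(1-q)\sum_{n\ge0}(q^{n^{2}}+q^{(n+1)^{2}})\sum_{j=-n}^{n}q^{j^{2}}$.

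Finally I would equate the two sides, cancel the common factor $1-q$, and reconcile the two double sums. The key bookkeeping step is to reindex $\sum_{n\ge0}q^{(n+1)^{2}}\sum_{j=-n}^{n}q^{j^{2}}$ as $\sum_{n\ge1}q^{n^{2}}\sum_{j=-(n-1)}^{n-1}q^{j^{2}}$ and then combine the two inner sums via $\sum_{j=-n}^{n}q^{j^{2}}+\sum_{j=-(n-1)}^{n-1}q^{j^{2}}=2\sum_{j=-n}^{n}q^{j^{2}}-2q^{n^{2}}$, which produces exactly the factor $2$ and the correction $-2\sum_{n\ge1}q^{2n^{2}}$ needed; dividing by $2$ gives the claim. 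The main obstacle is less any single computation than pinning down this precise specialization: one must guess $(a,b)=(q,1)$ so that the prefactor reduces to $1-q$, the $(q;q^{2})_{k}$-factors cancel, and the boundary terms $j=\pm n$ align to convert the symmetric range $-n\le j\le n$ into the asymmetric range encoded by the $-\sum_{n\ge1}q^{2n^{2}}$ term.
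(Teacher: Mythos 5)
Your proposal is correct and follows essentially the same route as the paper: the paper specializes \eqref{eq:7-15} at $(a,b)=(1,q)$, which by the $a\leftrightarrow b$ symmetry of Theorem \ref{t} is the same specialization as your $(a,b)=(q,1)$, and then performs the identical simplification, reindexing $n\mapsto n+1$, and boundary-term bookkeeping (the paper phrases the combined inner sum as $2\sum_{j=-n+1}^{n}q^{j^{2}}$ rather than $2\sum_{j=-n}^{n}q^{j^{2}}-2q^{n^{2}}$, which is the same identity). All of your intermediate computations check out.
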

\noindent{\it Proof.} Setting $a=1$ and $b=q$ in \eqref{eq:7-15}
and simplifying we find that
\begin{align*}
1+2\sum_{n=1}^{\infty}\frac{q^{n}}{1+q^{2n}} & =\text{}_{3}\phi_{2}\left(\begin{matrix}q^{2},q,-1\\
q,-q^{2}
\end{matrix};q^{2},q\right)\\
 & =\sum_{n=0}^{\infty}(1+q^{2n+1})q^{n^{2}}\sum_{j=-n}^{n}q^{j^{2}}.
\end{align*}
Thus
\begin{align*}
1+2\sum_{n=1}^{\infty}\frac{q^{n}}{1+q^{2n}} & =\sum_{n=0}^{\infty}q^{n^{2}}\sum_{j=-n}^{n}q^{j^{2}}+\sum_{n=0}^{\infty}q^{(n+1)^{2}}\sum_{j=-n}^{n}q^{j^{2}}\\
 & =1+\sum_{n=1}^{\infty}q^{n^{2}}\sum_{j=-n}^{n}q^{j^{2}}+\sum_{n=1}^{\infty}q^{n^{2}}\sum_{j=-n+1}^{n-1}q^{j^{2}}\\
 & =1+2\sum_{n=1}^{\infty}\sum_{j=-n+1}^{n}q^{n^{2}+j^{2}}.
\end{align*}
This means that
\[
\sum_{n=1}^{\infty}\frac{q^{n}}{1+q^{2n}}=\sum_{n=1}^{\infty}\sum_{j=-n+1}^{n}q^{n^{2}+j^{2}}.
\]
This finishes the proof of Theorem \ref{t1}. \qed

From Theorem \ref{t1} we can derive an interesting identity, which
is given in the following corollary.
\begin{cor}
\label{c1} We have
\[
\sum_{n=1}^{\infty}\frac{q^{n}}{1+q^{2n}}=-\frac{1}{4}+\frac{1}{4}\bigg(\sum_{n=-\infty}^{\infty}q^{2n^{2}}\bigg)^{2}+q\bigg(\sum_{n=0}^{\infty}q^{2n(n+1)}\bigg)^{2}.
\]
\end{cor}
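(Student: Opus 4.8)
The plan is to start from the Hecke-type identity of Theorem~\ref{t1},
\[
\sum_{n=1}^{\infty}\frac{q^{n}}{1+q^{2n}}=\sum_{n=1}^{\infty}\sum_{j=-n}^{n}q^{n^{2}+j^{2}}-\sum_{n=1}^{\infty}q^{2n^{2}},
\]
and to transform the double sum on the right into a product of two one-dimensional theta series. First I would rewrite the right-hand side by completing the range of the $n$-summation to all of $\mathbb{Z}$: since the summand $q^{n^{2}+j^{2}}$ is even in $n$ and the inner range $-n\le j\le n$ is symmetric, the double sum $\sum_{n=1}^{\infty}\sum_{j=-n}^{n}q^{n^{2}+j^{2}}$ should be reassembled into a sum over the full square lattice restricted by $|j|\le|n|$. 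The key observation is that the constraint $|j|\le|n|$ together with its symmetric counterpart $|n|\le|j|$ partitions (up to the diagonal overlap) the entire lattice $\mathbb{Z}^{2}$, so that two copies of the restricted sum recover the unrestricted product $\big(\sum_{n}q^{n^{2}}\big)^{2}$ plus a diagonal correction.

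The cleaner route, which I would pursue, is to add the two presentations of the Hecke sum coming from Theorem~\ref{t1}: the original identity gives $\sum_{j=-n}^{n}$ while the reindexing $j\mapsto\pm$ in the proof of that theorem already produced the equivalent form $\sum_{j=-n+1}^{n}$. Summing these, each lattice point $(n,j)$ with $|j|\le n$ is counted, and combining with the symmetric region $|n|\le|j|$ lets me write
\[
\bigg(\sum_{n=-\infty}^{\infty}q^{n^{2}}\bigg)^{2}=\sum_{(n,j)\in\mathbb{Z}^{2}}q^{n^{2}+j^{2}}
\]
and isolate the portion matching the left-hand side. Concretely, I expect to split $\mathbb{Z}^{2}$ according to the parity/sign of $n\pm j$; the substitution $(n,j)\mapsto\big(\tfrac{u+v}{2},\tfrac{u-v}{2}\big)$ converts $n^{2}+j^{2}$ into $\tfrac{1}{2}(u^{2}+v^{2})$ with $u\equiv v\pmod 2$, which is precisely the mechanism that produces the two theta factors $\sum_{n}q^{2n^{2}}$ (the $u,v$ both even part) and $\sum_{n}q^{2n(n+1)+\text{shift}}$ (the $u,v$ both odd part), with the extra factor $q$ in the stated corollary coming from $u,v$ odd giving exponent $\tfrac12(u^2+v^2)$ with smallest value $1$.

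The arithmetic backbone is the classical two-squares theta decomposition: writing $\theta(q)=\sum_{n}q^{n^{2}}$, one has $\theta(q)^{2}=\sum_{(n,j)}q^{n^{2}+j^{2}}$, and sorting the lattice by the residue of $n+j\bmod 2$ yields $\theta(q)^{2}=\big(\sum_{n}q^{2n^{2}}\big)^{2}+\big(\sum_{n}q^{2n^{2}+2n+\tfrac12}\big)^{2}$ after the rotation by $45^{\circ}$ — equivalently $\theta(q)^2=\varphi(q^2)^2+4q\,\psi(q^4)^2$ in Ramanujan's notation, which matches the shapes $\sum_{n}q^{2n^{2}}$ and $\sum_{n\ge0}q^{2n(n+1)}$ appearing in the corollary. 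The main obstacle I anticipate is bookkeeping the exact constant $-\tfrac14$ and the coefficient $\tfrac14$: one must track how many times the diagonal $|j|=n$ and the origin are counted when passing from the restricted Hecke sum to the full unrestricted product, and reconcile the factor-of-two discrepancies introduced by the symmetric completion. I would handle this by carefully writing the unrestricted double sum as $\tfrac14$ of the square theta times a correction accounting for the boundary, then matching term-by-term against $\sum_{n\ge1}q^{n}/(1+q^{2n})=\sum_{n\ge1}\sum_{j=-n+1}^{n}q^{n^{2}+j^{2}}$ from Theorem~\ref{t1}. Once the constant is pinned down, the identity follows by equating the two evaluations.
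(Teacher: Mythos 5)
Your plan is essentially the paper's own proof: the paper isolates exactly the identity $\sum_{n\ge1}\sum_{j=-n+1}^{n}q^{n^{2}+j^{2}}=-\tfrac14+\tfrac14\big(\sum_{n=-\infty}^{\infty}q^{2n^{2}}\big)^{2}+q\big(\sum_{n\ge0}q^{2n(n+1)}\big)^{2}$ as Lemma \ref{l1} and proves it by the same $45^{\circ}$ substitution $(u,v)=(n+j,n-j)$ and parity split that you describe, after which the corollary is just Theorem \ref{t1} plus Lemma \ref{l1}. The only organizational difference is direction: rather than starting from the full lattice sum $\big(\sum_{n}q^{n^{2}}\big)^{2}$ and tracking diagonal overlaps and the factor $\tfrac14$, the paper expands each of the two theta squares as a quarter-lattice sum and maps it bijectively onto one parity class of the restricted region, which eliminates the boundary/constant bookkeeping you correctly flag as the main remaining obstacle.
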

From this result we have
\[
1+4\sum_{n=1}^{\infty}\frac{q^{n}}{1+q^{2n}}=\bigg(\sum_{n=-\infty}^{\infty}q^{2n^{2}}\bigg)^{2}+4q\bigg(\sum_{n=0}^{\infty}q^{2n(n+1)}\bigg)^{2},
\]
which, together with \cite[(2)]{ALL}, gives 
\[
\bigg(\sum_{n=-\infty}^{\infty}q^{n^{2}}\bigg)^{2}=\bigg(\sum_{n=-\infty}^{\infty}q^{2n^{2}}\bigg)^{2}+4q\bigg(\sum_{n=0}^{\infty}q^{2n(n+1)}\bigg)^{2}.
\]
This identity can also be deduced by using the results \cite[Theorem 5.4.1 (i),(iv) and Theorem 5.4.2 (iv)]{B}
or \cite[p. 40, Entry 25 (v), (vi)]{B91}.

Before proving Corollary \ref{c1} we need an auxiliary result.
\begin{lem}
\label{l1} We have

\[
\sum_{n=1}^{\infty}\sum_{m=-n+1}^{n}q^{n^{2}+m^{2}}=-\frac{1}{4}+\frac{1}{4}\bigg(\sum_{n=-\infty}^{\infty}q^{2n^{2}}\bigg)^{2}+q\bigg(\sum_{n=0}^{\infty}q^{2n(n+1)}\bigg)^{2}.
\]
\end{lem}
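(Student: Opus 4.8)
The plan is to read the left-hand side as a sum of $q^{x^{2}+y^{2}}$ over a planar lattice region and to exploit the rotational symmetry of the quadratic form $x^{2}+y^{2}$. Writing $x=n$ and $y=m$, the double sum on the left becomes $\sum_{(x,y)\in R}q^{x^{2}+y^{2}}$, where
\[
R=\{(x,y)\in\mathbb{Z}^{2}:x\geq 1,\ -x+1\leq y\leq x\}.
\]
For integer points the conditions $x\geq 1$ and $-x<y\leq x$ say exactly that the argument of $x+iy$ lies in the half-open angular sector $(-\pi/4,\pi/4]$. First I would check that the four images $\rho^{k}(R)$, $k=0,1,2,3$, of $R$ under the quarter-turn $\rho(x,y)=(-y,x)$ are the four half-open sectors $(-\pi/4+k\pi/2,\ \pi/4+k\pi/2]$, and hence partition $\mathbb{Z}^{2}\setminus\{(0,0)\}$, each nonzero lattice point being covered exactly once.

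Since $\rho$ preserves $x^{2}+y^{2}$, it carries $R$ bijectively onto each $\rho^{k}(R)$ without changing the summand, so all four sub-sums coincide and
\[
\sum_{(x,y)\in R}q^{x^{2}+y^{2}}=\frac14\sum_{(x,y)\neq(0,0)}q^{x^{2}+y^{2}}.
\]
It then remains to evaluate the full punctured-plane sum, which I would do by separating terms according to the parity of $x^{2}+y^{2}$, i.e.\ according to whether $x\equiv y\pmod 2$. For the equal-parity terms the substitution $x=u+v,\ y=u-v$ (a bijection onto $\mathbb{Z}^{2}$) gives $x^{2}+y^{2}=2u^{2}+2v^{2}$, so this part equals $\big(\sum_{u}q^{2u^{2}}\big)^{2}$, from which subtracting the origin term $1$ accounts for the excluded point. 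For the opposite-parity terms both $x+y$ and $x-y$ are odd, so writing $x+y=2s+1$ and $x-y=2t+1$ yields $x^{2}+y^{2}=2s(s+1)+2t(t+1)+1$, whence this part equals $q\big(\sum_{s\in\mathbb{Z}}q^{2s(s+1)}\big)^{2}$; the reflection $s\mapsto -1-s$ gives $\sum_{s\in\mathbb{Z}}q^{2s(s+1)}=2\sum_{s\geq0}q^{2s(s+1)}$, so this part is $4q\big(\sum_{s\geq0}q^{2s(s+1)}\big)^{2}$.

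Combining the three displays gives
\[
\sum_{(x,y)\in R}q^{x^{2}+y^{2}}=\frac14\Big(\big(\textstyle\sum_{n}q^{2n^{2}}\big)^{2}-1\Big)+q\big(\textstyle\sum_{n\geq0}q^{2n(n+1)}\big)^{2},
\]
which is precisely the claimed identity; all rearrangements are justified by absolute convergence for $|q|<1$. I expect the only delicate point to be the bookkeeping in the partition claim: one must verify that the half-open range $-x+1\leq y\leq x$ assigns each diagonal point with $|x|=|y|$, as well as each point on the coordinate axes, to exactly one of the four rotated sectors, so that no lattice point is double-counted or omitted. Once that boundary accounting is settled, the rotational averaging and the two elementary changes of variables complete the proof, with no appeal to any prior theta-function identity required.
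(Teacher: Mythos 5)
Your proof is correct. The sector $R=\{(x,y)\in\mathbb{Z}^{2}:x\geq1,\,-x<y\leq x\}$ and its three quarter-turn images do tile $\mathbb{Z}^{2}\setminus\{(0,0)\}$ (the half-open inequality is exactly what assigns each diagonal and axis point to a unique copy), so the rotational invariance of $x^{2}+y^{2}$ gives the factor $\tfrac14$, and your two substitutions correctly evaluate the equal- and opposite-parity parts of the punctured-plane sum as $\big(\sum_{n\in\mathbb{Z}}q^{2n^{2}}\big)^{2}-1$ and $4q\big(\sum_{n\geq0}q^{2n(n+1)}\big)^{2}$. The paper organizes the same computation in the opposite direction and never forms the full lattice sum: it expands $\big(1+2\sum_{n\geq1}q^{2n^{2}}\big)^{2}$ to write $-\tfrac14+\tfrac14\big(\sum_{n}q^{2n^{2}}\big)^{2}$ as $\sum_{n\geq1}\sum_{m\geq0}q^{(n+m)^{2}+(n-m)^{2}}$ and checks that $(n,m)\mapsto(n+m,n-m)$ carries the quarter-plane $\{n\geq1,\,m\geq0\}$ onto the points of the sector satisfying $m\equiv n\pmod 2$, while $(n,m)\mapsto(n+m+1,n-m)$ identifies $q\big(\sum_{n\geq0}q^{2n(n+1)}\big)^{2}$ with the points satisfying $m\equiv n+1\pmod 2$; adding the two congruence classes yields the left-hand side. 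The core ingredients --- the parity split and the $45^{\circ}$ change of variables --- coincide in both arguments, but your rotational averaging concentrates all the boundary bookkeeping into the single tiling statement (which you rightly flag as the one point needing care, and which does hold), whereas the paper trades that for tracking congruence conditions and summation limits class by class. Both routes are complete and elementary, and neither requires any prior theta-function identity.
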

\begin{proof}
On one hand, since
\begin{align*}
\bigg(\sum_{n=-\infty}^{\infty}q^{2n^{2}}\bigg)^{2} & =\bigg(1+2\sum_{n=1}^{\infty}q^{2n^{2}}\bigg)^{2}\\
 & =1+4\sum_{n=1}^{\infty}q^{2n^{2}}+4\bigg(\sum_{n=1}^{\infty}q^{2n^{2}}\bigg)^{2}\\
 & =1+4\bigg(\sum_{n=1}^{\infty}q^{2n^{2}}+\sum_{n=1}^{\infty}\sum_{m=1}^{\infty}q^{2n^{2}+2m^{2}}\bigg),
\end{align*}
we have
\begin{align*}
-\frac{1}{4}+\frac{1}{4}\bigg(\sum_{n=-\infty}^{\infty}q^{2n^{2}}\bigg)^{2} & =\sum_{n=1}^{\infty}\sum_{m=0}^{\infty}q^{2n^{2}+2m^{2}}\\
 & =\sum_{n=1}^{\infty}\sum_{m=0}^{\infty}q^{(n+m)^{2}+(n-m)^{2}}.
\end{align*}
Set $n'=n+m,m'=n-m.$ Then the set of integer pairs:
\[
\{(n,m)\in\mathbb{Z}^{2}|n\geq1,m\geq0\}
\]
 is transformed into the set 
\[
\{(n',m')\in\mathbb{Z}^{2}|n'\geq1,2-n'\leq m'\leq n',n'\equiv m'(\bmod2)\}.
\]
Thus
\begin{equation}
-\frac{1}{4}+\frac{1}{4}\bigg(\sum_{n=-\infty}^{\infty}q^{2n^{2}}\bigg)^{2}=\sum_{n=1}^{\infty}\sum_{\substack{1-n\leq m\leq n\\
m\equiv n(\bmod2)
}
}q^{n^{2}+m^{2}}.\label{eq:1-4}
\end{equation}
On the other hand,
\begin{align*}
q\bigg(\sum_{n=0}^{\infty}q^{2n(n+1)}\bigg)^{2} & =\sum_{n=0}^{\infty}\sum_{m=0}^{\infty}q^{2n(n+1)+2m(m+1)+1}\\
 & =\sum_{n=0}^{\infty}\sum_{m=0}^{\infty}q^{(n+m+1)^{2}+(n-m)^{2}}.
\end{align*}
Take $n'=n+m+1,m'=n-m.$ Then the set of integer pairs:
\[
\{(n,m)\in\mathbb{Z}^{2}|n\geq0,m\geq0\}
\]
 is transformed into the set 
\[
\{(n',m')\in\mathbb{Z}^{2}|n'\geq1,1-n'\leq m'\leq n'-1,n'+1\equiv m'(\bmod2)\}.
\]
Therefore
\begin{equation}
q\bigg(\sum_{n=0}^{\infty}q^{2n(n+1)}\bigg)^{2}=\sum_{n=1}^{\infty}\sum_{\substack{1-n\leq m\leq n\\
m\equiv n+1(\bmod2)
}
}q^{n^{2}+m^{2}}\label{eq:1-5}
\end{equation}

Combining the identities \eqref{eq:1-4} and \eqref{eq:1-5} we can
easily obtain the result. This completes the proof of Lemma \ref{l1}.
\end{proof}
\noindent{\it Proof of Corollary \ref{c1}.} The identity in Corollary
\ref{c1} can be obtained by combining Theorem \ref{t1} and Lemma
\ref{l1}. \qed

We next employ \eqref{eq:7-15} to deduce another double series identity,
which is given in the following theorem.
\begin{thm}
\label{t3-5} We have
\[
\sum_{n=1}^{\infty}\frac{(-1)^{n}q^{n^{2}+n}}{(1+q^{2n})(q;q^{2})_{n}}=\sum_{n=1}^{\infty}\sum_{j=-n}^{n}(-1)^{n}q^{2n^{2}+j^{2}}-\sum_{n=1}^{\infty}(-1)^{n}q^{3n^{2}}.
\]
\end{thm}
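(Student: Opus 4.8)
The plan is to specialize the master identity \eqref{eq:7-15} at a suitable choice of the two parameters $(a,b)$, exactly as was done for Theorem \ref{t1}, and then to massage the resulting double sum so that the term $n=0$ and a diagonal piece are separated off. Looking at the target, the right-hand side carries weight $q^{2n^2+j^2}$ with the sign $(-1)^n$, so I would try to force the quadratic exponent $n(n-1)$ coming from \eqref{eq:7-15} up to $2n^2$ by supplying an extra $q^{n^2+n}$ and to produce the alternating sign $(-1)^n$ from the factor $(ab)^n$. This suggests choosing $a$ and $b$ so that $ab$ contributes both the sign and the needed power of $q$; a natural guess parallel to the $a=1,b=q$ choice of Theorem \ref{t1} is to take $a=-1$ (to generate $(-1)^n$ and the $_3\phi_2$ collapsing nicely since $q^2/a=-q^2$) together with $b=q$ or $b=q^{-1}$ chosen to land the exponent on $2n^2$.

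First I would substitute the chosen $(a,b)$ into \eqref{eq:7-15}, simplify the infinite product $\frac{(q^2,ab;q^2)_\infty}{(aq^2,bq^2;q^2)_\infty}$ and the terminating $_3\phi_2$ on the left, and check that the left side reduces to the series $1+2\sum_{n\ge 1}\frac{(-1)^n q^{n^2+n}}{(1+q^{2n})(q;q^2)_n}$ (up to the separated $n=0$ term). On the right side, the factor $(q^2/a,q^2/b;q^2)_n/(aq^2,bq^2;q^2)_n$ should telescope to something elementary, while $\frac{(1-q^{4n+2})}{(\cdots)}$ combined with $(ab)^n q^{n(n-1)}$ should deliver $(-1)^n(1+q^{2n+1})q^{2n^2-\text{something}}$. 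The key computational step is verifying that all the $q$-Pochhammer factors collapse so cleanly; this is the same phenomenon exploited in Theorem \ref{t1}, where the choice made the $n$-dependent coefficient equal to $1+q^{2n+1}$, so I expect an analogous cancellation here.

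Once the right side is in the form $\sum_{n\ge 0}(-1)^n(1+q^{2n+1})q^{2n^2}\sum_{j=-n}^n q^{j^2}$ (or a close variant), I would split $1+q^{2n+1}$ into its two summands and re-index the second one by $n\mapsto n+1$, precisely as in the proof of Theorem \ref{t1}. The two resulting inner sums have ranges $\sum_{j=-n}^n$ and $\sum_{j=-n+1}^{n-1}$; combining them merges into $\sum_{j=-n+1}^n$ with a doubled weight, while the mismatch at the endpoints and the $n=0$ term peel off to produce the correction $-\sum_{n\ge 1}(-1)^n q^{3n^2}$ (the $q^{3n^2}$ arising from the $j=\pm n$ boundary terms at weight $q^{2n^2+n^2}$). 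Matching the sign of this leftover against the stated $-\sum_{n=1}^\infty(-1)^n q^{3n^2}$ then finishes the identity.

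The main obstacle I anticipate is pinning down the exact $(a,b)$ so that the exponent is $2n^2$ rather than $n^2$ and the $(q;q^2)_n$ appears in the denominator on the left: unlike Theorem \ref{t1}, where $a=1$ trivialized one Pochhammer, here both parameters must be nontrivial to generate the $(q;q^2)_n$ factor, so the bookkeeping of which factor $(aq^2;q^2)_n$ or $(bq^2;q^2)_n$ survives is delicate. I would double-check the boundary re-indexing, since the extra $q^n$ in $q^{2n^2+n^2}=q^{3n^2}$ (as opposed to the plain $q^{2n^2}$ in Theorem \ref{t1}) means the leftover diagonal term is genuinely different and must be tracked with care through the shift $n\mapsto n+1$.
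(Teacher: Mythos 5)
Your overall strategy coincides with the paper's: specialize \eqref{eq:7-15}, then split the factor $(1-q^{4n+2})$, re-index the second piece by $n\mapsto n+1$, and peel off the $j=-n$ boundary term to produce $-\sum_{n\geq1}(-1)^{n}q^{3n^{2}}$. That part of your plan is sound (one small correction: the coefficient is $(1-q^{4n+2})$, not $(1+q^{2n+1})$, and it is precisely the identity $2n^{2}+4n+2=2(n+1)^{2}$ that makes the shift close up). The genuine gap is the specialization itself. With $a$ and $b$ both finite and nonzero, the $n$-th term on the right of \eqref{eq:7-15} carries $q^{n(n-1)}(ab)^{n}$, whose exponent is $n^{2}+O(n)$; no such choice can reach $2n^{2}$. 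Your concrete candidate $(a,b)=(-1,q)$ gives exponent $n^{2}$, and on the left the parameters $-q^{2}$ and $q$ both cancel out of the $_{3}\phi_{2}$, so you do not obtain the target series $\sum(-1)^{n}q^{n^{2}+n}/((1+q^{2n})(q;q^{2})_{n})$ at all --- its numerator $q^{n^{2}+n}$ is quadratic in $n$ and cannot arise from a $_{3}\phi_{2}$ with fixed nonzero parameters and fixed argument. You flag this as the ``main obstacle'' but do not resolve it.

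The missing idea is to send one parameter to $0$: the paper takes $a=1$ and $b\rightarrow0$. Then $(q^{2}/b;q^{2})_{n}\,b^{n}\rightarrow(-1)^{n}q^{n(n+1)}$, which supplies exactly the extra $q^{n^{2}+n}$ you correctly identified as necessary: on the right it turns $q^{n(n-1)}$ into $(-1)^{n}q^{2n^{2}}$, and on the left it produces the term $\frac{2(-1)^{n}q^{n^{2}+n}}{(1+q^{2n})(q;q^{2})_{n}}$, the factor $\frac{2}{1+q^{2n}}$ coming from $(-1;q^{2})_{n}/(-q^{2};q^{2})_{n}$. With that substitution in place, the rest of your argument goes through exactly as in the paper.
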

\begin{proof}
It is easily seen that 
\[
1+2\sum_{n=1}^{\infty}\frac{(-1)^{n}q^{n^{2}+n}}{(1+q^{2n})(q;q^{2})_{n}}=\lim_{b\rightarrow0}\text{}_{3}\phi_{2}\left(\begin{matrix}q^{2},q^{2}/b,-1\\
q,-q^{2}
\end{matrix};q^{2},b\right).
\]
Set $a=1,b\rightarrow0$ in \eqref{eq:7-15}. We obtain
\[
1+2\sum_{n=1}^{\infty}\frac{(-1)^{n}q^{n^{2}+n}}{(1+q^{2n})(q;q^{2})_{n}}=\sum_{n=0}^{\infty}\sum_{j=-n}^{n}(-1)^{n}(1-q^{4n+2})q^{2n^{2}+j^{2}}.
\]
Since
\begin{align*}
 & \sum_{n=0}^{\infty}\sum_{j=-n}^{n}(-1)^{n}(1-q^{4n+2})q^{2n^{2}+j^{2}}\\
 & =\sum_{n=0}^{\infty}\sum_{j=-n}^{n}(-1)^{n}q^{2n^{2}+j^{2}}+\sum_{n=0}^{\infty}\sum_{j=-n}^{n}(-1)^{n+1}q^{2(n+1)^{2}+j^{2}}\\
 & =1+\sum_{n=1}^{\infty}\sum_{j=-n}^{n}(-1)^{n}q^{2n^{2}+j^{2}}+\sum_{n=1}^{\infty}\sum_{j=-n+1}^{n-1}(-1)^{n}q^{2n^{2}+j^{2}}\\
 & =1+2\sum_{n=1}^{\infty}\sum_{j=-n+1}^{n}(-1)^{n}q^{2n^{2}+j^{2}},
\end{align*}
we see that
\[
\sum_{n=1}^{\infty}\frac{(-1)^{n}q^{n^{2}+n}}{(1+q^{2n})(q;q^{2})_{n}}=\sum_{n=1}^{\infty}\sum_{j=-n+1}^{n}(-1)^{n}q^{2n^{2}+j^{2}}.
\]
This completes the proof of Theorem \ref{t3-5}.
\end{proof}
Substituting $a=1,b=-q$ into \eqref{eq:7-15} and then replacing
$q$ by $-q$ in the resulting identity we can easily obtain the formula
\eqref{eq:6-1}.

From the formula \eqref{eq:7-15} we can deduce many other Hecke-type
identities associated with definite quadratic forms. Set $(a,b)=(q,-q),(-1,\pm q),(0,\pm q),(0,-1)$
and $(0,0)$ in \eqref{eq:7-15}. We get
\begin{align}
\sum_{n=0}^{\infty}\frac{(-1)^{n}(-q;q^{2})_{n}q^{2n}}{(1+q^{2n})(q^{2};q^{2})_{n}} & =\frac{(q^{2};q^{4})_{\infty}}{2(q^{4};q^{4})_{\infty}}\sum_{n=0}^{\infty}\sum_{j=-n}^{n}(-1)^{n}q^{n(n+1)+j^{2}},\nonumber \\
\sum_{n=0}^{\infty}\frac{(-1;q^{2})_{n}q^{n}}{(q^{2};q^{2})_{n}} & =\frac{(-q;q)_{\infty}}{(q;q)_{\infty}}\sum_{n=0}^{\infty}\sum_{j=-n}^{n}(-1)^{j}(1-q^{2n+1})q^{n^{2}+j^{2}},\nonumber \\
\sum_{n=0}^{\infty}\frac{(-1;q)_{2n}q^{n}}{(q;q)_{2n}} & =\frac{(-q;q)_{\infty}}{(q;q)_{\infty}}\sum_{n=0}^{\infty}\sum_{j=-n}^{n}(1-q^{2n+1})q^{n^{2}+j^{2}},\nonumber \\
\sum_{n=0}^{\infty}\frac{(-1)^{n}q^{n^{2}+2n}}{(1+q^{2n})(q^{2};q^{2})_{n}} & =\frac{(q;q^{2})_{\infty}}{2(q^{2};q^{2})_{\infty}}\sum_{n=0}^{\infty}\sum_{j=-n}^{n}(-1)^{n}(1+q^{2n+1})q^{2n^{2}+n+j^{2}},\label{eq:5-8}\\
\sum_{n=0}^{\infty}\frac{(-q;q^{2})_{n}q^{n^{2}+2n}}{(1+q^{2n})(q;q)_{2n}} & =\frac{(-q;q^{2})_{\infty}}{2(q^{2};q^{2})_{\infty}}\sum_{n=0}^{\infty}\sum_{j=-n}^{n}(1-q^{2n+1})q^{2n^{2}+n+j^{2}},\nonumber \\
\sum_{n=0}^{\infty}\frac{(-1;q^{2})_{n}}{(q;q)_{2n}}q^{n^{2}+n} & =\frac{(-q^{2};q^{2})_{\infty}}{(q^{2};q^{2})_{\infty}}\sum_{n=0}^{\infty}\sum_{j=-n}^{n}(1-q^{4n+2})q^{2n^{2}+j^{2}},\nonumber \\
\sum_{n=0}^{\infty}\frac{q^{2n^{2}+2n}}{(1+q^{2n})(q;q)_{2n}} & =\frac{1}{2(q^{2};q^{2})_{\infty}}\sum_{n=0}^{\infty}\sum_{j=-n}^{n}(1-q^{4n+2})q^{3n^{2}+n+j^{2}}.\nonumber 
\end{align}

\begin{thm}
\label{t3-3} For $\max\{|aq^{2}|,|bq^{2}|,|ab|\}<1,$ we have
\begin{equation}
\begin{aligned} & \frac{(q^{2},ab;q^{2})_{\infty}}{(aq^{2},bq^{2};q^{2})_{\infty}}\text{}_{3}\phi_{2}\left(\begin{matrix}q^{2}/a,q^{2}/b,q\\
-q^{2},q^{3}
\end{matrix};q^{2},ab\right)\\
 & =(1-q)\sum_{n=0}^{\infty}\sum_{j=-n}^{n}\frac{(1+q^{2n+1})(q^{2}/a,q^{2}/b;q^{2})_{n}(-ab)^{n}}{(aq^{2},bq^{2};q^{2})_{n}}q^{n^{2}+j^{2}}.
\end{aligned}
\label{eq:7-13}
\end{equation}
\end{thm}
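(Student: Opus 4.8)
The plan is to mirror the proof of Theorem~\ref{t}: reduce \eqref{eq:7-13} to a closed-form evaluation of the inner terminating series
\[
{}_3\phi_2\left(\begin{matrix}q^{-2n},q^{2n+2},q\\ -q^2,q^3\end{matrix};q^2,q^2\right),
\]
and then insert that evaluation into the $q^2$-specialization of \eqref{eq:1-3}. I expect this inner series to equal $\frac{(1-q)q^n}{1-q^{2n+1}}\sum_{j=-n}^n q^{j^2}$. Granting this, replacing $q$ by $q^2$ and taking $\alpha=q^2,\ \beta=q,\ c=-q^2,\ d=q^3$ in \eqref{eq:1-3} --- the same confluence that carried \eqref{eq:1-3} to \eqref{eq:1-8} in the proof of Theorem~\ref{t} --- produces a transformation whose right-hand side I can simplify using $1-q^{4n+2}=(1-q^{2n+1})(1+q^{2n+1})$ and $q^{n(n-1)}\cdot q^n=q^{n^2}$. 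The factor $1-q^{2n+1}$ then cancels, the constant $1-q$ factors out, and what remains is exactly the right-hand side of \eqref{eq:7-13}. The hypothesis $\max\{|aq^2|,|bq^2|,|ab|\}<1$ is precisely the convergence condition demanded by \eqref{eq:1-3} for these parameter values.

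To secure the closed form I would start from \eqref{eq:1-1}. Replacing $(q,a,b,d,e)$ by $(q^2,q^{2n+2},-q,-q^2,q)$ --- the choice used in the proof of Theorem~\ref{t} but with the two lower parameters $d$ and $e$ interchanged --- turns the argument $deq^{2n}/(ab)$ into $1$ and yields
\[
{}_3\phi_2\left(\begin{matrix}q^{-2n},q^{2n+2},-q\\ -q^2,q\end{matrix};q^2,1\right)=\frac{(q^{-2n-1};q^2)_n}{(q;q^2)_n}\,{}_3\phi_2\left(\begin{matrix}q^{-2n},q^{2n+2},q\\ -q^2,q^3\end{matrix};q^2,q^2\right).
\]
Since the order of the lower parameters is immaterial, the series on the left is identical to the one already evaluated in \eqref{eq:1-6}, so I can invoke \eqref{eq:1-6} directly instead of reapplying \eqref{eq:1-2}.

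The one place that calls for genuine computation, and the step I anticipate as the main obstacle, is the simplification of the prefactor $(q^{-2n-1};q^2)_n/(q;q^2)_n$. Factoring $1-q^{2k-2n-1}=-q^{2k-2n-1}(1-q^{2n+1-2k})$ and reversing the product gives $(q^{-2n-1};q^2)_n=(-1)^n q^{-n^2-2n}(q^3;q^2)_n$, after which the telescoping $(q^3;q^2)_n/(q;q^2)_n=(1-q^{2n+1})/(1-q)$ collapses the prefactor to $(-1)^n q^{-n^2-2n}(1-q^{2n+1})/(1-q)$. Solving the displayed relation for the inner series, substituting the value $(-1)^n q^{-n(n+1)}\sum_{j=-n}^n q^{j^2}$ of the argument-$1$ series from \eqref{eq:1-6}, and tidying up the powers of $q$ and the sign $(-1)^n(-1)^n=1$ then gives the desired $\frac{(1-q)q^n}{1-q^{2n+1}}\sum_{j=-n}^n q^{j^2}$. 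From there the bookkeeping described in the first paragraph finishes the proof.
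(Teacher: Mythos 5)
Your proposal is correct, and the final assembly is exactly the paper's: establish the closed form
\[
{}_3\phi_2\left(\begin{matrix}q^{-2n},q^{2n+2},q\\ -q^{2},q^{3}\end{matrix};q^{2},q^{2}\right)=\frac{(1-q)q^{n}}{1-q^{2n+1}}\sum_{j=-n}^{n}q^{j^{2}},
\]
which is the paper's \eqref{eq:7-7}, and then substitute it into \eqref{eq:1-3} with $q\mapsto q^{2}$ and $\alpha=q^{2},\beta=q,c=-q^{2},d=q^{3}$; the cancellation $(1-q^{4n+2})/(1-q^{2n+1})=1+q^{2n+1}$ and the power count $q^{n(n-1)}\cdot q^{n}=q^{n^{2}}$ are exactly as you describe. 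Where you genuinely diverge is in how you reach \eqref{eq:7-7}. The paper gets there in four moves: it sends $q\mapsto -q$ in \eqref{eq:1-6} to obtain \eqref{eq:7-2}, applies the transformation \eqref{eq:7-1} to produce the auxiliary evaluation \eqref{eq:7-4}, sends $q\mapsto -q$ again to get \eqref{eq:7-6}, and finally applies \eqref{eq:1-1} with $a=q^{2n+2},b=q^{2},d=q^{3},e=-q^{2}$. You instead apply \eqref{eq:1-1} once, with $(q,a,b,d,e)\mapsto(q^{2},q^{2n+2},-q,-q^{2},q)$, directly to the series already evaluated in \eqref{eq:1-6} (using that the lower parameters may be permuted), and your prefactor computation $(q^{-2n-1};q^{2})_{n}=(-1)^{n}q^{-n^{2}-2n}(q^{3};q^{2})_{n}$ and $(q^{3};q^{2})_{n}/(q;q^{2})_{n}=(1-q^{2n+1})/(1-q)$ checks out, yielding \eqref{eq:7-7} in one step. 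Your route is shorter and avoids \eqref{eq:7-1} entirely; it is in fact the same device the paper uses later to prove \eqref{eq:9-3}. What the paper's longer chain buys is the intermediate identity \eqref{eq:7-4}, which it needs independently for Theorem \ref{t3-2}, so nothing is wasted there --- but for Theorem \ref{t3-3} alone your argument is cleaner.
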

In order to derive \eqref{eq:7-13} we first prove two important results.
\begin{lem}
For any non-negative integer $n,$ we have
\begin{equation}
_{3}\phi_{2}\left(\begin{matrix}q^{-2n},q^{2n+2},q^{2}\\
-q^{2},-q^{3}
\end{matrix};q^{2},q\right)=\frac{(-1)^{n}(1+q)}{q^{n^{2}}(1+q^{2n+1})}\sum_{j=-n}^{n}(-1)^{j}q^{j^{2}}.\label{eq:7-4}
\end{equation}
\end{lem}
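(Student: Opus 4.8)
The plan is to collapse the base-$q^2$ series in \eqref{eq:7-4}, whose argument is $q$, onto a terminating series of argument $1$ that can then be summed in closed form by the $\alpha=1$ specialization of \eqref{eq:1-2}. Since the first upper parameter is $q^{-2n}$ and the second is $q^{2n+2}=q^2\cdot q^{2n}$, the transformation that respects this $q^{-2n},\alpha q^{2n}$ structure while changing the argument from $de/(ab)$ to $q/b$ is \eqref{eq:7-1}, so I would apply that first and evaluate the resulting argument-$1$ series afterwards.

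First I would replace $q$ by $q^2$ in \eqref{eq:7-1} and set $a=b=q^2$, $d=-q^2$, $e=-q^3$. Then $de/(ab)=q$, so the left-hand side is precisely the series on the left of \eqref{eq:7-4}; on the right one computes $aq^2/d=-q^2$, $aq^2/e=-q$, $abq^2/(de)=q$ and $q^2/b=1$, so the transformed series is ${}_3\phi_2(q^{-2n},q^{2n+2},q;-q^2,-q;q^2,1)$. The prefactor $(de/(aq^2))^n(aq^2/d,aq^2/e;q^2)_n/(d,e;q^2)_n$ simplifies, after cancelling $(-q^2;q^2)_n$, to $q^n(-q;q^2)_n/(-q^3;q^2)_n$, and the surviving ratio telescopes to $(1+q)/(1+q^{2n+1})$. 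This yields
\[
{}_3\phi_2\!\left(\begin{matrix}q^{-2n},q^{2n+2},q^2\\ -q^2,-q^3\end{matrix};q^2,q\right)=\frac{(1+q)q^n}{1+q^{2n+1}}\,{}_3\phi_2\!\left(\begin{matrix}q^{-2n},q^{2n+2},q\\ -q^2,-q\end{matrix};q^2,1\right).
\]

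Next I would evaluate the argument-$1$ series by the device already used to obtain \eqref{eq:1-6}: replace $q$ by $q^2$ in \eqref{eq:1-2} and set $\alpha=1$, $c=-q$, $d=-q^2$. Then $\alpha cd/q^2=q$ and $(\alpha c,\alpha d)=(-q,-q^2)$, so the left-hand side becomes $(-1)^nq^{n(n+1)}$ times exactly the series just produced. On the right-hand side the factor $(-q;q^2)_j$ cancels; reading the degenerate quotient as the limit gives $(1-q^{4j})(q^2;q^2)_{j-1}/(q^2;q^2)_j=1+q^{2j}$, while $(-1;q^2)_j/(-q^2;q^2)_j=2/(1+q^{2j})$ for $j\ge1$, so their product is the constant $2$, and $q^{j(j-3)}(q^3)^j=q^{j^2}$. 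Hence the right-hand side collapses to $1+2\sum_{j=1}^n(-1)^jq^{j^2}=\sum_{j=-n}^n(-1)^jq^{j^2}$, that is,
\[
(-1)^nq^{n(n+1)}\,{}_3\phi_2\!\left(\begin{matrix}q^{-2n},q^{2n+2},q\\ -q,-q^2\end{matrix};q^2,1\right)=\sum_{j=-n}^n(-1)^jq^{j^2}.
\]
Because the lower pairs $-q^2,-q$ and $-q,-q^2$ describe the same series, substituting this evaluation into the previous display and reducing the scalar $\tfrac{(1+q)q^n}{1+q^{2n+1}}\cdot\tfrac{(-1)^n}{q^{n(n+1)}}=\tfrac{(-1)^n(1+q)}{q^{n^2}(1+q^{2n+1})}$ gives \eqref{eq:7-4}.

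The parameter bookkeeping and the two telescopings are routine. The one genuinely delicate point is the $\alpha=1$ specialization of \eqref{eq:1-2}: the factor $(\alpha;q^2)_j/(1-\alpha)$ is $0/0$ at $\alpha=1$ and must be read as the limit $(q^2;q^2)_{j-1}$, and it is exactly the choice $c=-q$ (rather than $c=q$, which produces the unsigned sum in \eqref{eq:1-6}) that inserts the alternating factor $(-1)^j$ without disturbing the cancellations. I expect this sign-tracking through the $\alpha\to1$ limit to be the main thing to verify carefully; everything else follows the template of the proof of Theorem~\ref{t}.
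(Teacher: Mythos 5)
Your proof is correct and follows essentially the same route as the paper: both apply \eqref{eq:7-1} with $q\mapsto q^2$, $a=b=q^2$, $d=-q^2$, $e=-q^3$ and then feed in the evaluation $(-1)^nq^{n(n+1)}\,{}_3\phi_2(q^{-2n},q^{2n+2},q;-q,-q^2;q^2,1)=\sum_{j=-n}^n(-1)^jq^{j^2}$. The only cosmetic difference is that the paper obtains that evaluation by replacing $q$ with $-q$ in \eqref{eq:1-6}, whereas you rederive it directly from the $\alpha=1$, $c=-q$, $d=-q^2$ case of \eqref{eq:1-2}; both are valid.
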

\noindent{\it Proof.} Replacing $q$ by $-q$ in \eqref{eq:1-6}
yields
\begin{equation}
(-1)^{n}q^{n(n+1)}\text{}_{3}\phi_{2}\left(\begin{matrix}q^{-2n},q^{2n+2},q\\
-q,-q^{2}
\end{matrix};q^{2},1\right)=\sum_{j=-n}^{n}(-1)^{j}q^{j^{2}}.\label{eq:7-2}
\end{equation}
Replacing $q$ by $q^{2}$ and then taking $a=b=q^{2},d=-q^{2},e=-q^{3}$
in \eqref{eq:7-1} we conclude that
\[
_{3}\phi_{2}\left(\begin{matrix}q^{-2n},q^{2n+2},q^{2}\\
-q^{2},-q^{3}
\end{matrix};q^{2},q\right)=\frac{(1+q)q^{n}}{1+q^{2n+1}}{}_{3}\phi_{2}\left(\begin{matrix}q^{-2n},q^{2n+2},q\\
-q,-q^{2}
\end{matrix};q^{2},1\right).
\]
Then the result \eqref{eq:7-4} follows easily by substituting \eqref{eq:7-2}
into the right-hand side of the above identity. \qed
\begin{lem}
For any nonnegative integer $n$, we have
\begin{equation}
_{3}\phi_{2}\left(\begin{matrix}q^{-2n},q^{2n+2},q\\
-q^{2},q^{3}
\end{matrix};q^{2},q^{2}\right)=\frac{(1-q)q^{n}}{1-q^{2n+1}}\sum_{j=-n}^{n}q^{j^{2}}.\label{eq:7-7}
\end{equation}
\end{lem}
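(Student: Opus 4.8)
The plan is to reduce the left-hand side of \eqref{eq:7-7} to the terminating ${}_3\phi_2$ appearing in \eqref{eq:1-6}, whose value is the theta-type sum $\sum_{j=-n}^{n}q^{j^2}$. The natural tool is the transformation \eqref{eq:1-1} (used in exactly this spirit in the proof of Theorem \ref{t}), rather than \eqref{eq:7-1}: since the lower pair $-q^2,q^3$ has mixed signs, matching the left-hand side of \eqref{eq:7-7} against the left-hand side of \eqref{eq:7-1} would force the argument to be $-q^2$ rather than $q^2$, whereas \eqref{eq:1-1} produces the desired series as the transformed series on its right-hand side, leaving the argument free to equal $q^2$.

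Concretely, I would replace $q$ by $q^2$ in \eqref{eq:1-1} and take $(a,b,d,e)=(q^{2n+2},-q,-q^2,q)$. Then the argument of the left-hand series collapses, $\frac{deq^{2n}}{ab}=\frac{(-q^2)(q)q^{2n}}{(q^{2n+2})(-q)}=1$, so that series is exactly the ${}_3\phi_2$ of \eqref{eq:1-6} (with its two lower parameters written in the reverse order). On the right-hand side one finds $d/b=q$ and $aq^{2-2n}/e=q^3$, so the transformed series is precisely the left-hand side of \eqref{eq:7-7}, carried by the prefactor $(e/a;q^2)_n/(e;q^2)_n=(q^{-2n-1};q^2)_n/(q;q^2)_n$. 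Solving for the series in \eqref{eq:7-7} then expresses it as $\frac{(q;q^2)_n}{(q^{-2n-1};q^2)_n}$ times the series of \eqref{eq:1-6}.

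It then remains to simplify the prefactor and invoke \eqref{eq:1-6}. Reflecting the negative-index factorial through $1-q^{2k-2n-1}=-q^{-(2n+1-2k)}(1-q^{2n+1-2k})$ gives $(q^{-2n-1};q^2)_n=(-1)^n q^{-(n^2+2n)}(q^3;q^2)_n$, while the telescoping ratio $(q;q^2)_n/(q^3;q^2)_n=(1-q)/(1-q^{2n+1})$ collapses the prefactor to $(-1)^n(1-q)q^{n^2+2n}/(1-q^{2n+1})$. Substituting the evaluation $(-1)^n q^{-n(n+1)}\sum_{j=-n}^{n}q^{j^2}$ of the \eqref{eq:1-6} series, the two factors $(-1)^n$ cancel and $q^{n^2+2n}q^{-n(n+1)}=q^n$, which leaves exactly the right-hand side $\frac{(1-q)q^n}{1-q^{2n+1}}\sum_{j=-n}^{n}q^{j^2}$ of \eqref{eq:7-7}.

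I expect the one genuinely error-prone step to be the reflection and telescoping in the third paragraph: turning the base-$q^2$, negative-index shifted factorial $(q^{-2n-1};q^2)_n$ into $(-1)^n q^{-(n^2+2n)}(q^3;q^2)_n$ and then tracking the signs and $q$-powers so that everything cancels down to the claimed prefactor. Everything else is a direct specialization of \eqref{eq:1-1} followed by the substitution of \eqref{eq:1-6}.
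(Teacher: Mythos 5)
Your proof is correct, and it takes a genuinely shorter route than the paper. The paper reaches \eqref{eq:7-7} by applying \eqref{eq:1-1} with $(a,b,d,e)=(q^{2n+2},q^{2},q^{3},-q^{2})$ (base $q^{2}$) so as to reduce the series with argument $q^{2}$ to the series with argument $-q$ evaluated in \eqref{eq:7-6}; but \eqref{eq:7-6} is itself obtained from \eqref{eq:7-4} by $q\mapsto-q$, and \eqref{eq:7-4} in turn requires \eqref{eq:7-1} applied to \eqref{eq:7-2}. You instead apply \eqref{eq:1-1} with $(a,b,d,e)=(q^{2n+2},-q,-q^{2},q)$, which sends the left-hand side of \eqref{eq:7-7} directly back to the series of \eqref{eq:1-6} in one step, bypassing \eqref{eq:7-1}, \eqref{eq:7-4} and \eqref{eq:7-6} entirely. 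I checked the specialization: the argument $deq^{2n}/(ab)$ does collapse to $1$, the transformed parameters $d/b=q$ and $aq^{2-2n}/e=q^{3}$ are as you claim (the lower parameters of a $_{3}\phi_{2}$ are symmetric, so the reordering is harmless), and the prefactor computation $(q^{-2n-1};q^{2})_{n}=(-1)^{n}q^{-(n^{2}+2n)}(q^{3};q^{2})_{n}$ together with $(q;q^{2})_{n}/(q^{3};q^{2})_{n}=(1-q)/(1-q^{2n+1})$ and $q^{n^{2}+2n}\cdot q^{-n(n+1)}=q^{n}$ is right. What your route buys is self-containedness for this lemma; what the paper's route buys is economy across the section, since \eqref{eq:7-4} is needed anyway for Theorem \ref{t3-2}, so deriving \eqref{eq:7-7} from its $q\mapsto-q$ image \eqref{eq:7-6} reuses work already done.
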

\begin{proof}
Replacing $q$ by $-q$ in \eqref{eq:7-4} gives
\begin{equation}
_{3}\phi_{2}\left(\begin{matrix}q^{-2n},q^{2n+2},q^{2}\\
-q^{2},q^{3}
\end{matrix};q^{2},-q\right)=\frac{1-q}{q^{n^{2}}(1-q^{2n+1})}\sum_{j=-n}^{n}q^{j^{2}}.\label{eq:7-6}
\end{equation}
Replacing $q$ by $q^{2}$ and then putting $a=q^{2n+2},b=q^{2},d=q^{3},e=-q^{2}$
in \eqref{eq:1-1} we find
\[
\text{}_{3}\phi_{2}\left(\begin{matrix}q^{-2n},q^{2n+2},q\\
-q^{2},q^{3}
\end{matrix};q^{2},q^{2}\right)=\frac{(-q^{2};q^{2})_{n}}{(-q^{-2n};q^{2})_{n}}\text{}_{3}\phi_{2}\left(\begin{matrix}q^{-2n},q^{2n+2},q^{2}\\
-q^{2},q^{3}
\end{matrix};q^{2},-q\right).
\]
Then the identity \eqref{eq:7-7} can be obtained by substituting
\eqref{eq:7-6} into the right-hand side of the above formula and
then applying the identity $\frac{(-q^{2};q^{2})_{n}}{(-q^{-2n};q^{2})_{n}}=q^{n^{2}+n}.$
\end{proof}
We are now in the position to prove Theorem \ref{t3-3}.

\noindent{\it Proof of Theorem \ref{t3-3}.} Replace $q$ by $q^{2}$
and then set $\alpha=q^{2},\beta=q,c=-q^{2},d=q^{3}$ in \eqref{eq:1-3}.
We obtain
\begin{align*}
 & \frac{(q^{2},ab;q^{2})_{\infty}}{(aq^{2},bq^{2};q^{2})_{\infty}}\text{}_{3}\phi_{2}\left(\begin{matrix}q^{2}/a,q^{2}/b,q\\
-q^{2},q^{3}
\end{matrix};q^{2},ab\right)\\
 & =\sum_{n=0}^{\infty}\frac{(1-q^{4n+2})(q^{2}/a,q^{2}/b;q^{2})_{n}(-ab)^{n}q^{n(n-1)}}{(aq^{2},bq^{2};q^{2})_{n}}\text{}_{3}\phi_{2}\left(\begin{matrix}q^{-2n},q^{2n+2},q\\
-q^{2},q^{3}
\end{matrix};q^{2},q^{2}\right).
\end{align*}
Then the result \eqref{eq:7-13} follows easily by substituting \eqref{eq:7-7}
into the right-hand side of the above equation. \qed

From Theorem \ref{t3-3} we can derive some Hecke-type identities
associated with definite quadratic forms. Taking $(a,b)=(1,-q)$ and
$(-1,-q)$ in \eqref{eq:7-13} we get
\[
\sum_{n=0}^{\infty}\frac{(-q;q^{2})_{n}(-q)^{n}}{(-q^{2};q^{2})_{n}(1-q^{2n+1})}=\sum_{n=0}^{\infty}\sum_{j=-n}^{n}q^{n^{2}+n+j^{2}},
\]
and

\begin{equation}
\sum_{n=0}^{\infty}\frac{(-q;q^{2})_{n}q{}^{n}}{(q^{2};q^{2})_{n}(1-q^{2n+1})}=\frac{(-q;q)_{\infty}}{(q;q)_{\infty}}\sum_{n=0}^{\infty}\sum_{j=-n}^{n}(-1)^{n}q^{n^{2}+n+j^{2}}.\label{eq:5-1}
\end{equation}

Let $(a,b)=(1,0)$, $(-q,0)$, $(-1,0)$ and $(0,0)$ in \eqref{eq:7-13}.
We obtain
\begin{align}
\sum_{n=0}^{\infty}\frac{(-1)^{n}q^{n^{2}+n}}{(-q^{2};q^{2})_{n}(1-q^{2n+1})} & =\sum_{n=0}^{\infty}\sum_{j=-n}^{n}(1+q^{2n+1})q^{2n^{2}+n+j^{2}},\nonumber \\
\sum_{n=0}^{\infty}\frac{(-q;q^{2})_{n}q^{n^{2}+2n}}{(1-q^{2n+1})(q^{4};q^{4})_{n}} & =\frac{(-q;q^{2})_{\infty}}{(q^{2};q^{2})_{\infty}}\sum_{n=0}^{\infty}\sum_{j=-n}^{n}(-1)^{n}q^{2n^{2}+2n+j^{2}},\label{eq:5-2}\\
\sum_{n=0}^{\infty}\frac{q^{n^{2}+n}}{(1-q^{2n+1})(q^{2};q^{2})_{n}} & =\frac{(-q^{2};q^{2})_{\infty}}{(q^{2};q^{2})_{\infty}}\sum_{n=0}^{\infty}\sum_{j=-n}^{n}(-1)^{n}(1+q^{2n+1})q^{2n^{2}+n+j^{2}},\label{eq:5-3}
\end{align}
and 
\begin{equation}
\sum_{n=0}^{\infty}\frac{q^{2n^{2}+2n}}{(1-q^{2n+1})(q^{4};q^{4})_{n}}=\frac{1}{(q^{2};q^{2})_{\infty}}\sum_{n=0}^{\infty}\sum_{j=-n}^{n}(-1)^{n}(1+q^{2n+1})q^{3n^{2}+2n+j^{2}}.\label{eq:5-4}
\end{equation}

\begin{thm}
For $\max\{|aq^{2}|,|bq^{2}|,|ab/q^{2}|\}<1,$ we have
\begin{equation}
\begin{aligned} & \frac{(q^{2},ab;q^{2})_{\infty}}{(aq^{2},bq^{2};q^{2})_{\infty}}\text{}_{3}\phi_{2}\left(\begin{matrix}q^{2}/a,q^{2}/b,-q\\
q,-q^{2}
\end{matrix};q^{2},\frac{ab}{q^{2}}\right)\\
 & =\sum_{n=0}^{\infty}\sum_{j=-n}^{n}\frac{(1-q^{4n+2})(q^{2}/a,q^{2}/b;q^{2})_{n}(ab/q^{2})^{n}q^{j^{2}}}{(aq^{2},bq^{2};q^{2})_{n}}.
\end{aligned}
\label{eq:2-2}
\end{equation}
\end{thm}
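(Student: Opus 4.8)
The plan is to follow the template already used for Theorems~\ref{t} and~\ref{t3-3}: I specialize one of Liu's master transformation formulas so that its left-hand side reproduces the product times ${}_3\phi_2$ on the left of \eqref{eq:2-2}, and then collapse the terminating inner ${}_3\phi_2$ on the right into the theta-type sum $\sum_{j=-n}^{n}q^{j^2}$. The one genuinely new feature is the argument $ab/q^2$ inside the left-hand ${}_3\phi_2$, which prevents me from using the $z=1$ reduction \eqref{eq:1-3} employed before and forces me to invoke the full formula \eqref{eq:t2-4} at a nontrivial value of $z$.

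Concretely, I would replace $q$ by $q^2$ in \eqref{eq:t2-4} and then set
\[
\alpha=q^2,\qquad \beta=-q,\qquad c=q,\qquad d=-q^2,\qquad z=q^{-2}.
\]
With these choices the left-hand argument becomes $\alpha abz/q^2=ab/q^2$ and the inner argument becomes $q^2z=1$, exactly as required, while the hypothesis $\max\{|\alpha a|,|\alpha b|,|\alpha abz/q^2|\}<1$ collapses precisely to $\max\{|aq^2|,|bq^2|,|ab/q^2|\}<1$. The prefactor produced by the formula is $(q^4,ab;q^2)_\infty/(aq^2,bq^2;q^2)_\infty$, which differs from the target prefactor in \eqref{eq:2-2} only by the factor $(q^2;q^2)_\infty/(q^4;q^2)_\infty=1-q^2$. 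Multiplying the whole identity through by $1-q^2$ therefore restores the target prefactor on the left and, on the right, removes the $1/(1-\alpha)=1/(1-q^2)$ sitting in each summand; the surviving $(\alpha;q^2)_n=(q^2;q^2)_n$ then cancels the $q$-factorial $(q^2;q^2)_n$ in the denominator.

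The only remaining ingredient is the evaluation of the inner series
\[
{}_3\phi_2\!\left(\begin{matrix}q^{-2n},\,q^{2n+2},\,-q\\ q,\,-q^2\end{matrix};q^2,1\right),
\]
and this is already in hand: it is exactly the series occurring in \eqref{eq:1-6}, so I may read off
\[
{}_3\phi_2\!\left(\begin{matrix}q^{-2n},\,q^{2n+2},\,-q\\ q,\,-q^2\end{matrix};q^2,1\right)=(-1)^n q^{-n(n+1)}\sum_{j=-n}^{n}q^{j^2}.
\]
Inserting this into the renormalized right-hand side reduces the rest to bookkeeping: the sign $(-ab)^n$ arising from $(-\alpha ab/q^2)^n$ meets the $(-1)^n$ from the inner evaluation to give $(ab)^n$, and the $q$-powers combine as $q^{n(n-1)}\cdot q^{-n(n+1)}=q^{-2n}$, so that $(ab)^n$ turns into $(ab/q^2)^n$. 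What is left is precisely the double sum on the right of \eqref{eq:2-2}.

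Because every nontrivial evaluation is inherited verbatim from \eqref{eq:1-6}, there is no hard analytic step; the only real decision is the choice of specialization, and above all the nonstandard value $z=q^{-2}$. It is this value that simultaneously produces the correct left-hand argument $ab/q^2$ and drops the inner argument to $1$, so that the already-established identity \eqref{eq:1-6}—rather than \eqref{eq:7-14}, which was the appropriate tool in Theorem~\ref{t}—applies without modification.
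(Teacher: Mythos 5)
Your proof is correct and follows essentially the same route as the paper: specialize \eqref{eq:t2-4} (with $q\mapsto q^{2}$, $\alpha=q^{2}$, $z=q^{-2}$) and evaluate the inner terminating ${}_{3}\phi_{2}$ via \eqref{eq:1-6}. In fact your parameter choice $\beta=-q$, $c=q$, $d=-q^{2}$ is the one actually consistent with the displayed ${}_{3}\phi_{2}$ in \eqref{eq:2-2}; the paper's stated substitution $\beta=q$, $c=-q$ appears to be a typo, so your version is the cleaner one.
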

\noindent{\it Proof.} Replace $q$ by $q^{2}$ in \eqref{eq:t2-4}
and then take $z=q^{-2},\alpha=q^{2},\beta=q,c=-q,d=-q^{2}$ in the
resulting identity. We obtain
\begin{align*}
 & \frac{(q^{2},ab;q^{2})_{\infty}}{(aq^{2},bq^{2};q^{2})_{\infty}}\text{}_{3}\phi_{2}\left(\begin{matrix}q^{2}/a,q^{2}/b,-q\\
q,-q^{2}
\end{matrix};q^{2},\frac{ab}{q^{2}}\right)\\
 & =\sum_{n=0}^{\infty}\frac{(1-q^{4n+2})(q^{2}/a,q^{2}/b;q^{2})_{n}(-ab)^{n}q^{n(n-1)}}{(aq^{2},bq^{2};q^{2})_{n}}\text{}_{3}\phi_{2}\left(\begin{matrix}q^{-2n},q^{2n+2},-q\\
q,-q^{2}
\end{matrix};q^{2},1\right).
\end{align*}
Then the result \eqref{eq:2-2} follows readily by substituting \eqref{eq:1-6}
into the right-hand side of this identity. \qed

From the identity \eqref{eq:2-2} we can obtain certain Hecke-type
identities associated with definite quadratic forms. Taking $(a,b)=(0,0),(-1,0),(1,0),(q,0)$
and $(-q,0)$ in \eqref{eq:2-2} gives
\begin{align*}
\sum_{n=0}^{\infty}\frac{(-q;q^{2})_{n}q^{2n^{2}}}{(q;q^{2})_{n}(q^{4};q^{4})_{n}} & =\frac{1}{(q^{2};q^{2})_{\infty}}\sum_{n=0}^{\infty}\sum_{j=-n}^{n}(1-q^{4n+2})q^{2n^{2}+j^{2}},\\
\sum_{n=0}^{\infty}\frac{(-q;q^{2})_{n}q^{n^{2}-n}}{(q;q)_{2n}} & =\frac{(-q^{2};q^{2})_{\infty}}{(q^{2};q^{2})_{\infty}}\sum_{n=0}^{\infty}\sum_{j=-n}^{n}(1-q^{4n+2})q^{n^{2}-n+j^{2}},\\
\sum_{n=0}^{\infty}\frac{(-1)^{n}(-q;q^{2})_{n}q^{n^{2}-n}}{(q,-q^{2};q^{2})_{n}} & =\sum_{n=0}^{\infty}\sum_{j=-n}^{n}(-1)^{n}(1-q^{4n+2})q^{n^{2}-n+j^{2}},\\
\sum_{n=0}^{\infty}\frac{(-1)^{n}(-q;q^{2})_{n}q^{n^{2}}}{(q^{4};q^{4})_{n}} & =\frac{(q;q^{2})_{\infty}}{(q^{2};q^{2})_{\infty}}\sum_{n=0}^{\infty}\sum_{j=-n}^{n}(-1)^{n}(1+q^{2n+1})q^{n^{2}+j^{2}},
\end{align*}
and
\[
\sum_{n=0}^{\infty}\frac{(-q;q^{2})_{n}^{2}q^{n^{2}}}{(q;q^{2})_{n}(q^{4};q^{4})_{n}}=\frac{(-q;q^{2})_{\infty}}{(q^{2};q^{2})_{\infty}}\sum_{n=0}^{\infty}\sum_{j=-n}^{n}(1-q^{2n+1})q^{n^{2}+j^{2}}.
\]

\begin{thm}
\label{t3-2} For $\max\{|aq^{2}|,|bq^{2}|,|ab/q|\}<1,$ we have
\begin{equation}
\begin{aligned} & \frac{(q^{2},ab;q^{2})_{\infty}}{(aq^{2},bq^{2};q^{2})_{\infty}}\sum_{n=0}^{\infty}\frac{(q^{2}/a,q^{2}/b;q^{2})_{n}}{(-q;q)_{2n+1}}\bigg(\frac{ab}{q}\bigg)^{n}\\
 & =\sum_{n=0}^{\infty}\sum_{j=-n}^{n}(-1)^{j}\frac{(1-q^{2n+1})(q^{2}/a,q^{2}/b;q^{2})_{n}(ab/q)^{n}q^{j^{2}}}{(aq^{2},bq^{2};q^{2})_{n}}.
\end{aligned}
\label{eq:7-3}
\end{equation}
\end{thm}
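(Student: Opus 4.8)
My plan is to specialize the general ${}_3\phi_2$-transformation \eqref{eq:t2-4}, following exactly the template that produced Theorems \ref{t} and \ref{t3-3}: I will choose the free parameters so that the inner ${}_3\phi_2$ on the right-hand side becomes the series already evaluated in closed form by \eqref{eq:7-4}, while the outer ${}_3\phi_2$ on the left-hand side collapses to the sum over $1/(-q;q)_{2n+1}$ displayed in the statement.

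Concretely, I would replace $q$ by $q^2$ in \eqref{eq:t2-4} and then set $z=q^{-1}$, $\alpha=q^2$, $\beta=q^2$, $c=-q^2$, $d=-q^3$. Under this choice the hypothesis $\max\{|\alpha a|,|\alpha b|,|\alpha abz/q^2|\}<1$ becomes precisely $\max\{|aq^2|,|bq^2|,|ab/q|\}<1$; the argument $\alpha abz/q^2$ of the outer series equals $ab/q$; and the argument $q^2z$ of the inner series equals $q$. The inner factor therefore reads
\[
{}_3\phi_2\left(\begin{matrix}q^{-2n},q^{2n+2},q^2\\-q^2,-q^3\end{matrix};q^2,q\right),
\]
which is exactly the left-hand side of \eqref{eq:7-4}. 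On the left-hand side, the choice $\beta=q^2$ cancels the $(q^2;q^2)_n$ in the definition of ${}_3\phi_2$, leaving the denominator $(-q^2,-q^3;q^2)_n$; the factorial identity $(-q^2,-q^3;q^2)_n=(-q;q)_{2n+1}/(1+q)$ then rewrites the outer ${}_3\phi_2$ as $(1+q)\sum_{n\ge0}(q^2/a,q^2/b;q^2)_n(ab/q)^n/(-q;q)_{2n+1}$, i.e.\ the series in the statement up to the scalar $1+q$.

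After substituting \eqref{eq:7-4} into the right-hand side, two further elementary simplifications finish the job: the cancellation $(1-q^{4n+2})/(1+q^{2n+1})=1-q^{2n+1}$ produces the claimed weight $1-q^{2n+1}$ while simultaneously consuming the $1+q$ coming from \eqref{eq:7-4}, so the two $(1+q)$ factors on the two sides cancel against one another; and the power count $(-ab)^n q^{n(n-1)}\cdot(-1)^n q^{-n^2}=(ab/q)^n$ fixes the monomial. Collecting everything yields \eqref{eq:7-3}.

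The only point requiring genuine care — the ``main obstacle'', such as it is — is the bookkeeping of the scalar factors $1\pm q$, $1\pm q^{2n+1}$ and the powers of $q$. One must check that the $1+q$ from the factorial splitting on the left exactly cancels the $1+q$ from \eqref{eq:7-4} on the right, and that the prefactor $(q^2,ab;q^2)_\infty$ is the correctly normalized image of $(\alpha q^2,\alpha ab/q^2;q^2)_\infty$ once the factor $1/(1-\alpha)=1/(1-q^2)$ from the transformation has been absorbed — precisely the renormalization used in passing from \eqref{eq:1-3} to \eqref{eq:1-8}. No new idea beyond the already-established evaluation \eqref{eq:7-4} is needed.
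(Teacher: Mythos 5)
Your proposal is correct and coincides with the paper's own proof: the paper likewise replaces $q$ by $q^{2}$ in \eqref{eq:t2-4}, sets $\alpha=\beta=q^{2}$, $c=-q^{2}$, $d=-q^{3}$, $z=q^{-1}$, and substitutes \eqref{eq:7-4} into the right-hand side. Your extra bookkeeping of the $(1+q)$ and $1/(1-q^{2})$ factors checks out and merely makes explicit what the paper leaves to the reader.
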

\noindent{\it Proof.} Replace $q$ by $q^{2}$ and set $\alpha=\beta=q^{2},c=-q^{2},d=-q^{3},z=q^{-1}$
in \eqref{eq:t2-4}. We arrive at
\begin{align*}
 & \frac{(q^{2},ab;q^{2})_{\infty}}{(aq^{2},bq^{2};q^{2})_{\infty}}\text{}_{3}\phi_{2}\left(\begin{matrix}q^{2}/a,q^{2}/b,q^{2}\\
-q^{2},-q^{3}
\end{matrix};q^{2},\frac{ab}{q}\right)\\
 & =\sum_{n=0}^{\infty}\frac{(1-q^{4n+2})(q^{2}/a,q^{2}/b;q^{2})_{n}(-ab)^{n}q^{n^{2}-n}}{(aq^{2},bq^{2};q^{2})_{n}}\text{}_{3}\phi_{2}\left(\begin{matrix}q^{-2n},q^{2n+2},q^{2}\\
-q^{2},-q^{3}
\end{matrix};q^{2},q\right).
\end{align*}
Then the formula \eqref{eq:7-3} follows immediately by substituting
\eqref{eq:7-4} into the right-hand side of this identity. \qed

Theorem \ref{t3-2} has the following consequence.
\begin{thm}
We have
\begin{equation}
\sum_{n=0}^{\infty}\frac{(-1)^{n}(q^{2};q^{2})_{n}q^{n^{2}}}{(-q;q)_{2n+1}}=1+2\sum_{n=1}^{\infty}\sum_{j=-n+1}^{n}(-1)^{n+j}q^{n^{2}+j^{2}}.\label{eq:8-1}
\end{equation}
\end{thm}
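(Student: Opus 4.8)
The plan is to obtain \eqref{eq:8-1} as the special case $a=1,\ b\to0$ of the parameterized identity \eqref{eq:7-3} in Theorem \ref{t3-2}, followed by the same type of index shift on the resulting double series that was used in the proofs of Theorems \ref{t1} and \ref{t3-5}.

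First I would set $a=1$ in \eqref{eq:7-3}, so that $(q^{2}/a;q^{2})_{n}=(q^{2};q^{2})_{n}$, and then let $b\to0$. The crucial limit is the evaluation of the $b$-dependent factor: since
\[
(q^{2}/b;q^{2})_{n}=\prod_{k=0}^{n-1}\Bigl(1-\frac{q^{2k+2}}{b}\Bigr)\sim(-1)^{n}b^{-n}q^{n(n+1)}\qquad(b\to0),
\]
the power $b^{-n}$ here cancels the $b^{n}$ hidden in $(ab/q)^{n}$ (with $a=1$), leaving
\[
(q^{2}/b;q^{2})_{n}\Bigl(\frac{ab}{q}\Bigr)^{n}\longrightarrow(-1)^{n}q^{n^{2}}.
\]
At the same time the prefactor $\frac{(q^{2},ab;q^{2})_{\infty}}{(aq^{2},bq^{2};q^{2})_{\infty}}$ tends to $1$, because with $a=1$ the factor $(aq^{2};q^{2})_{\infty}=(q^{2};q^{2})_{\infty}$ cancels the $(q^{2};q^{2})_{\infty}$ in the numerator, while $(ab;q^{2})_{\infty}\to1$ and $(bq^{2};q^{2})_{\infty}\to1$. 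Consequently the left side of \eqref{eq:7-3} becomes exactly the left side of \eqref{eq:8-1}, and the right side collapses to
\[
\sum_{n=0}^{\infty}\sum_{j=-n}^{n}(-1)^{n+j}(1-q^{2n+1})q^{n^{2}+j^{2}}.
\]
The convergence hypothesis $\max\{|aq^{2}|,|bq^{2}|,|ab/q|\}<1$ is preserved in this limit, so the specialization is legitimate.

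It then remains to massage this last double series into the stated right-hand side of \eqref{eq:8-1}. I would split off the $-q^{2n+1}$ piece, writing the sum over $-n\le j\le n$ as $\sum_{n,j}(-1)^{n+j}q^{n^{2}+j^{2}}-\sum_{n,j}(-1)^{n+j}q^{(n+1)^{2}+j^{2}}$, and shift $n\mapsto n-1$ in the second sum so that both are governed by $q^{n^{2}+j^{2}}$. Recombining the two ranges $-n\le j\le n$ and $-n+1\le j\le n-1$, the boundary terms $j=\pm n$ (each equal to $q^{2n^{2}}$) cancel and the interior contribution doubles, producing $1+2\sum_{n=1}^{\infty}\sum_{j=-n+1}^{n}(-1)^{n+j}q^{n^{2}+j^{2}}$, which is precisely \eqref{eq:8-1}.

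The only genuinely delicate point is the $b\to0$ limit in the first step: one must track the powers of $b$ and $q$ carefully so that the $b^{-n}$ coming from $(q^{2}/b;q^{2})_{n}$ exactly annihilates the $b^{n}$ from $(ab/q)^{n}$ and the residual $q$-power condenses to $q^{n^{2}}$. Everything else, namely the vanishing of the $b$-factors in the prefactor and the final index shift, is routine and mirrors the computations already carried out for Theorems \ref{t1} and \ref{t3-5}.
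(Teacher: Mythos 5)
Your proposal is correct and follows essentially the same route as the paper: specialize \eqref{eq:7-3} at $a=1$, $b\to0$ (your limit computation $(q^{2}/b;q^{2})_{n}(b/q)^{n}\to(-1)^{n}q^{n^{2}}$ is exactly what is needed), then perform the standard shift $n\mapsto n-1$ on the $-q^{(n+1)^{2}}$ piece. One small wording slip: the boundary terms $j=\pm n$ do not cancel — they are equal and together give twice the $j=n$ term, which is precisely why the doubled range can be written as $-n+1\le j\le n$; your final formula is nonetheless correct.
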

\begin{proof}
It is easy to see that
\begin{align*}
 & \sum_{n=0}^{\infty}\sum_{j=-n}^{n}(-1)^{n+j}(1-q^{2n+1})q^{n^{2}+j^{2}}\\
 & =\sum_{n=0}^{\infty}\sum_{j=-n}^{n}(-1)^{n+j}q^{n^{2}+j^{2}}+\sum_{n=0}^{\infty}\sum_{j=-n}^{n}(-1)^{n+1+j}q^{(n+1)^{2}+j^{2}}\\
 & =1+\sum_{n=1}^{\infty}\sum_{j=-n}^{n}(-1)^{n+j}q^{n^{2}+j^{2}}+\sum_{n=1}^{\infty}\sum_{j=-n+1}^{n-1}(-1)^{n+j}q^{n^{2}+j^{2}}\\
 & =1+2\sum_{n=1}^{\infty}\sum_{j=-n+1}^{n}(-1)^{n+j}q^{n^{2}+j^{2}}.
\end{align*}
Setting $a=1,b\rightarrow0$ in \eqref{eq:7-3} we have
\[
\sum_{n=0}^{\infty}\frac{(-1)^{n}(q^{2};q^{2})_{n}q^{n^{2}}}{(-q;q)_{2n+1}}=\sum_{n=0}^{\infty}\sum_{j=-n}^{n}(-1)^{n+j}(1-q^{2n+1})q^{n^{2}+j^{2}}.
\]
Then the formula \eqref{eq:8-1} follows by combining the above two
identities.
\end{proof}
From Theorem \ref{t3-2} we deduce some other Hecke-type identities
associated with definite quadratic forms. Putting $(a,b)=(0,0),(-1,0)$
and $(q,0)$ in \eqref{eq:7-3} produces 
\begin{align*}
 & \sum_{n=0}^{\infty}\frac{q^{2n^{2}+n}}{(-q;q)_{2n+1}}=\frac{1}{(q^{2};q^{2})_{\infty}}\sum_{n=0}^{\infty}\sum_{j=-n}^{n}(-1)^{j}(1-q^{2n+1})q^{2n^{2}+n+j^{2}},\\
 & \sum_{n=0}^{\infty}\frac{q^{n^{2}}}{(-q;q^{2})_{n+1}}=\frac{(-q^{2};q^{2})_{\infty}}{(q^{2};q^{2})_{\infty}}\sum_{n=0}^{\infty}\sum_{j=-n}^{n}(-1)^{j}(1-q^{2n+1})q^{n^{2}+j^{2}},\\
 & \sum_{n=0}^{\infty}\frac{(-1)^{n}(q;q^{2})_{n}q^{n^{2}+n}}{(-q;q)_{2n+1}}=\frac{(q;q^{2})_{\infty}}{(q^{2};q^{2})_{\infty}}\sum_{n=0}^{\infty}\sum_{j=-n}^{n}(-1)^{n+j}q^{n^{2}+n+j^{2}}.
\end{align*}

\begin{thm}
For $\max\{|aq^{4}|,|bq^{4}|,|ab|\}<1,$ we have
\begin{equation}
\begin{aligned} & \frac{(q^{2},abq^{2};q^{2})_{\infty}}{(aq^{4},bq^{4};q^{2})_{\infty}}\text{}_{3}\phi_{2}\left(\begin{matrix}q^{2}/a,q^{2}/b,-q\\
-q^{2},q^{3}
\end{matrix};q^{2},ab\right)\\
 & =(1-q)\sum_{n=0}^{\infty}\sum_{j=-n}^{n+1}\frac{(1-q^{4n+4})(q^{2}/a,q^{2}/b;q^{2})_{n}(ab)^{n}q^{j^{2}}}{(aq^{4},bq^{4};q^{2})_{n}}.
\end{aligned}
\label{eq:9-2}
\end{equation}
\end{thm}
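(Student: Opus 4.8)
The plan is to obtain \eqref{eq:9-2} as a specialization of the $_3\phi_2$-transformation \eqref{eq:t2-4}, following the template used for Theorems \ref{t} and \ref{t3-3}, the one new ingredient being a closed-form evaluation of the resulting inner series. Concretely, I would replace $q$ by $q^2$ in \eqref{eq:t2-4} and then set $\alpha=q^4$, $\beta=-q$, $c=-q^2$, $d=q^3$, and $z=q^{-2}$. With these choices the argument $\alpha abz/q^2$ of the left-hand $_3\phi_2$ collapses to $ab$, the outer product becomes $(q^6,abq^2;q^2)_\infty/(aq^4,bq^4;q^2)_\infty$ times the $_3\phi_2$ written in \eqref{eq:9-2}, the inner-series argument $q^2z$ becomes $1$, and the convergence hypothesis turns into $\max\{|aq^4|,|bq^4|,|ab|\}<1$. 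Each summand on the right then carries the terminating series ${}_3\phi_2(q^{-2n},q^{2n+4},-q;-q^2,q^3;q^2,1)$, so the whole theorem reduces to evaluating this one series.

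The heart of the proof is therefore an auxiliary lemma:
\[
{}_3\phi_2\left(\begin{matrix}q^{-2n},q^{2n+4},-q\\-q^2,q^3\end{matrix};q^2,1\right)=\frac{(-1)^n(1-q)}{q^{n(n+1)}(1-q^{2n+2})}\sum_{j=-n}^{n+1}q^{j^2}.
\]
I would prove this by specializing \eqref{eq:1-2}: replace $q$ by $q^2$ and take $\alpha=q^2$, $c=-1$, $d=q$, so that $\alpha q^{2n+2}=q^{2n+4}$, $\alpha cd/q^2=-q$, $\alpha c=-q^2$, $\alpha d=q^3$ match the inner series exactly. On the right-hand side of \eqref{eq:1-2} the $(q^2;q^2)_j$ and $(-q^2;q^2)_j$ factors cancel and the powers of $q$ together with the signs collapse, leaving $\sum_{j=0}^n\frac{(1-q^{4j+2})(q;q^2)_j}{(1-q^2)(q^3;q^2)_j}q^{j^2}$. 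Using $(q;q^2)_j/(q^3;q^2)_j=(1-q)/(1-q^{2j+1})$ and $(1-q^{4j+2})/(1-q^{2j+1})=1+q^{2j+1}$ this becomes $\frac{1}{1+q}\sum_{j=0}^n(1+q^{2j+1})q^{j^2}=\frac{1}{1+q}\big(\sum_{j=0}^nq^{j^2}+\sum_{j=1}^{n+1}q^{j^2}\big)=\frac{1}{1+q}\sum_{j=-n}^{n+1}q^{j^2}$, the last equality being the reflection $j\mapsto-j$ on the negative half. Solving for the inner $_3\phi_2$, and using $(q^4;q^2)_n/(q^2;q^2)_n=(1-q^{2n+2})/(1-q^2)$ together with $(1-q^2)/(1+q)=1-q$ on the left of \eqref{eq:1-2}, yields the lemma.

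Finally I would substitute the lemma into the specialized \eqref{eq:t2-4}. The only delicate point is the bookkeeping of the $n$-dependent factors: the $(-q^2ab)^nq^{n(n-1)}$ coming from \eqref{eq:t2-4} meets the $q^{-n(n+1)}$ from the lemma and collapses cleanly to $(ab)^n$, while the two copies of $(1-q^{2n+2})$ cancel, so each summand reduces to $(1-q^{4n+4})(q^2/a,q^2/b;q^2)_n(ab)^nq^{j^2}/(aq^4,bq^4;q^2)_n$. On the outer product the identity $(q^6;q^2)_\infty=(q^2;q^2)_\infty/[(1-q^2)(1-q^4)]$ produces a factor $1/[(1-q^2)(1-q^4)]$, and the summand acquires the same factor from the $1/(1-\alpha)=1/(1-q^4)$ of \eqref{eq:t2-4} together with the $1/(1-q^2)$ surviving from the Pochhammer ratio; cancelling this common factor from both sides leaves exactly the overall $1-q$ in front of the double sum, which is \eqref{eq:9-2}.

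The step that needs the most care is the lemma, and in particular the appearance of the asymmetric range $-n\le j\le n+1$. Unlike the symmetric evaluations \eqref{eq:1-6} and \eqref{eq:7-7}, which come from $\alpha=1$, the extra boundary term $q^{(n+1)^2}$ here is forced by taking $\alpha=q^2$ in \eqref{eq:1-2}; recognizing this shift and then carrying the resulting $(1-q^2)(1-q^4)$ normalization correctly through the assembly is where the real work lies, the remaining manipulations being routine.
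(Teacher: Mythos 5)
Your proposal is correct and follows essentially the same route as the paper: the same specialization of \eqref{eq:t2-4} (with $q\mapsto q^{2}$, $\alpha=q^{4}$, $\beta=-q$, $c=-q^{2}$, $d=q^{3}$, $z=q^{-2}$) combined with the same auxiliary evaluation of ${}_{3}\phi_{2}(q^{-2n},q^{2n+4},-q;-q^{2},q^{3};q^{2},1)$ obtained from \eqref{eq:1-2} with $q\mapsto q^{2}$, $\alpha=q^{2}$, $c=-1$, $d=q$. All the bookkeeping you describe (the cancellation of $(1-q^{2})(1-q^{4})$, the collapse of $(-q^{2}ab)^{n}q^{n(n-1)}q^{-n(n+1)}$ to $(ab)^{n}$, and the asymmetric range $-n\le j\le n+1$) checks out.
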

\begin{proof}
Replace $q$ by $q^{2}$ in \eqref{eq:1-2} and then take $\alpha=q^{2},c=-1,d=q.$
We get
\begin{align*}
 & (-1)^{n}(1-q^{2n+2})q^{n^{2}+n}{}_{3}\phi_{2}\left(\begin{matrix}q^{-2n},q^{2n+4},-q\\
-q^{2},q^{3}
\end{matrix};q^{2},1\right)\\
 & =(1-q)\sum_{j=0}^{n}(1+q^{2j+1})q^{j^{2}}=(1-q)\sum_{j=-n}^{n+1}q^{j^{2}}.
\end{align*}
Then
\begin{equation}
_{3}\phi_{2}\left(\begin{matrix}q^{-2n},q^{2n+4},-q\\
-q^{2},q^{3}
\end{matrix};q^{2},1\right)=\frac{(-1)^{n}(1-q)}{q^{n^{2}+n}(1-q^{2n+2})}\sum_{j=-n}^{n+1}q^{j^{2}}.\label{eq:9-1}
\end{equation}
Replacing $q$ by $q^{2}$ in \eqref{eq:t2-4} and then setting $\alpha=q^{4},\beta=-q,c=-q^{2},d=q^{3},z=q^{-2}$
we find that
\[
\begin{aligned} & \frac{(q^{2},abq^{2};q^{2})_{\infty}}{(aq^{4},bq^{4};q^{2})_{\infty}}\text{}_{3}\phi_{2}\left(\begin{matrix}q^{2}/a,q^{2}/b,-q\\
-q^{2},q^{3}
\end{matrix};q^{2},ab\right)\\
 & =\sum_{n=0}^{\infty}\frac{(1-q^{4n+4})(1-q^{2n+2})(q^{2}/a,q^{2}/b;q^{2})_{n}(-ab)^{n}q^{n(n+1)}}{(aq^{4},bq^{4};q^{2})_{n}}\\
 & \;\times\text{}_{3}\phi_{2}\left(\begin{matrix}q^{-2n},q^{2n+4},-q\\
-q^{2},q^{3}
\end{matrix};q^{2},1\right).
\end{aligned}
\]
Then the identity \eqref{eq:9-2} follows readily by substituting
\eqref{eq:9-1} into the above formula.
\end{proof}
Some Hecke-type identities associated with definite quadratic forms
can be deduced from the identity \eqref{eq:9-2}. Taking $(a,b)=(\pm1,0),(\pm q^{-1},0)$
and $(0,0)$ in \eqref{eq:9-2} we derive
\begin{align*}
\sum_{n=0}^{\infty}\frac{(-1)^{n}(-q;q^{2})_{n}q^{n^{2}+n}}{(-q^{2};q^{2})_{n}(q;q^{2})_{n+1}} & =\sum_{n=0}^{\infty}\sum_{j=-n}^{n+1}(-1)^{n}(1+q^{2n+2})q^{n^{2}+n+j^{2}},\\
\sum_{n=0}^{\infty}\frac{(-q;q^{2})_{n}q^{n^{2}+n}}{(q;q)_{2n+1}} & =\frac{(-q^{2};q^{2})_{\infty}}{(q^{2};q^{2})_{\infty}}\sum_{n=0}^{\infty}\sum_{j=-n}^{n+1}(1-q^{2n+2})q^{n^{2}+n+j^{2}},\\
\sum_{n=0}^{\infty}\frac{(-1)^{n}(-q;q^{2})_{n}q^{n^{2}}}{(q^{4};q^{4})_{n}} & =\frac{(q;q^{2})_{\infty}}{(q^{2};q^{2})_{\infty}}\sum_{n=0}^{\infty}\sum_{j=-n}^{n+1}(-1)^{n}(1-q^{4n+4})q^{n^{2}+j^{2}},\\
\sum_{n=0}^{\infty}\frac{(-q;q^{2})_{n}(-q;q^{2})_{n+1}q^{n^{2}}}{(-q^{2};q^{2})_{n}(q;q)_{2n+1}} & =\frac{(-q;q^{2})_{\infty}}{(q^{2};q^{2})_{\infty}}\sum_{n=0}^{\infty}\sum_{j=-n}^{n+1}(1-q^{4n+4})q^{n^{2}+j^{2}},\\
\sum_{n=0}^{\infty}\frac{(-q;q^{2})_{n}q^{2n^{2}+2n}}{(-q^{2};q^{2})_{n}(q;q)_{2n+1}} & =\frac{1}{(q^{2};q^{2})_{\infty}}\sum_{n=0}^{\infty}\sum_{j=-n}^{n+1}(1-q^{4n+4})q^{2n^{2}+2n+j^{2}}.
\end{align*}

\begin{thm}
\label{t3-9} For $\max\{|aq^{4}|,|bq^{4}|,|abq^{2}|\}<1,$ we have
\begin{equation}
\begin{aligned} & \frac{(q^{2},abq^{2};q^{2})_{\infty}}{(aq^{4},bq^{4};q^{2})_{\infty}}\text{}_{3}\phi_{2}\left(\begin{matrix}q^{2}/a,q^{2}/b,q\\
-q^{2},q^{3}
\end{matrix};q^{2},abq^{2}\right)\\
 & =(1-q)\sum_{n=0}^{\infty}\sum_{j=-n}^{n+1}\frac{(1-q^{4n+4})(q^{2}/a,q^{2}/b;q^{2})_{n}(-ab)^{n}q^{n(n+2)+j^{2}}}{(aq^{4},bq^{4};q^{2})_{n}}.
\end{aligned}
\label{eq:9-4}
\end{equation}
\end{thm}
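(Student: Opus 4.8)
The plan is to mirror the proof of the preceding theorem (the derivation of \eqref{eq:9-2}): first isolate the terminating inner ${}_3\phi_2$ that will appear, evaluate it in closed form, and then feed that evaluation into the master transformation \eqref{eq:t2-4}.

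For the master step I would replace $q$ by $q^{2}$ in \eqref{eq:t2-4} and specialize $\alpha=q^{4},\ \beta=q,\ c=-q^{2},\ d=q^{3},\ z=1$. Then the left-hand argument becomes $\alpha abz/q^{2}=q^{2}ab=abq^{2}$ and the inner argument becomes $q^{2}z=q^{2}$, matching \eqref{eq:9-4}; moreover the convergence condition $\max\{|\alpha a|,|\alpha b|,|\alpha abz/q^{2}|\}<1$ reads exactly $\max\{|aq^{4}|,|bq^{4}|,|abq^{2}|\}<1$. The resulting identity expresses the left-hand side of \eqref{eq:9-4}, up to the product conversion $(q^{6};q^{2})_{\infty}=(q^{2};q^{2})_{\infty}/[(1-q^{2})(1-q^{4})]$, as
\[
\sum_{n=0}^{\infty}\frac{(1-q^{4n+4})(q^{4},q^{2}/a,q^{2}/b;q^{2})_{n}(-q^{2}ab)^{n}q^{n(n-1)}}{(1-q^{4})(q^{2},aq^{4},bq^{4};q^{2})_{n}}\,\text{}_{3}\phi_{2}\left(\begin{matrix}q^{-2n},q^{2n+4},q\\-q^{2},q^{3}\end{matrix};q^{2},q^{2}\right).
\]

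The crux is therefore the inner evaluation
\[
\text{}_{3}\phi_{2}\left(\begin{matrix}q^{-2n},q^{2n+4},q\\-q^{2},q^{3}\end{matrix};q^{2},q^{2}\right)=\frac{(1-q)q^{n}}{1-q^{2n+2}}\sum_{j=-n}^{n+1}q^{j^{2}},
\]
which I would prove exactly as \eqref{eq:7-7} was deduced from its companion. Indeed, \eqref{eq:9-1} already gives a closed form for the companion series with third numerator parameter $-q$ and argument $1$. Replacing $q$ by $q^{2}$ in \eqref{eq:1-1} and choosing its parameters as $a=q^{2n+4},\ b=-q,\ d=-q^{2},\ e=q^{3}$ turns the left side of \eqref{eq:1-1} into precisely that companion series (its argument $deq^{2n}/(ab)$ collapses to $1$), while the right side becomes the target series with argument $q^{2}$, with connecting factor $(q^{-2n-1};q^{2})_{n}/(q^{3};q^{2})_{n}=(-1)^{n}q^{-n^{2}-2n}$. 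Inverting this factor, i.e. multiplying by $(-1)^{n}q^{n^{2}+2n}$ (which uses the reversal $(q^{-2n-1};q^{2})_{n}=(-1)^{n}q^{-n^{2}-2n}(q^{3};q^{2})_{n}$, proved by factoring $-q^{2k-2n-1}$ out of each term), and then substituting \eqref{eq:9-1} yields the displayed inner evaluation after the clean cancellation $q^{n^{2}+2n}/q^{n^{2}+n}=q^{n}$.

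Finally I would substitute the inner evaluation into the master identity. The only delicate point is the prefactor bookkeeping: combining $(-q^{2}ab)^{n}q^{n(n-1)}=(-ab)^{n}q^{n^{2}+n}$, the telescoping ratio $(q^{4};q^{2})_{n}/[(1-q^{4})(q^{2};q^{2})_{n}]=(1-q^{2n+2})/[(1-q^{2})(1-q^{4})]$, and the factors $q^{n}(1-q)/(1-q^{2n+2})$ from the inner evaluation together with the $1/[(1-q^{2})(1-q^{4})]$ from the product conversion. These collapse so that the power of $q$ becomes $q^{n^{2}+n}q^{n}=q^{n(n+2)}$, the factor $(1-q^{2n+2})$ cancels, and multiplying both sides by $(1-q^{2})(1-q^{4})$ removes those denominators, leaving the stated $(1-q)$ and $(1-q^{4n+4})$ factors; this produces \eqref{eq:9-4}. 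The main obstacle is simply carrying this power-of-$q$ and factorial cancellation through accurately, since the structural steps are forced by analogy with \eqref{eq:9-2} and \eqref{eq:7-7}.
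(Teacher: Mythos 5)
Your proposal is correct and follows essentially the same route as the paper: the inner evaluation \eqref{eq:9-3} is obtained exactly as you describe, by applying \eqref{eq:1-1} with $q\mapsto q^{2}$ and $a=q^{2n+4},b=-q,d=-q^{2},e=q^{3}$ together with \eqref{eq:9-1} and the reversal $(q^{-2n-1};q^{2})_{n}=(-1)^{n}q^{-n(n+2)}(q^{3};q^{2})_{n}$, and the master step is \eqref{eq:1-3} (i.e.\ \eqref{eq:t2-4} at $z=1$) with $\alpha=q^{4},\beta=q,c=-q^{2},d=q^{3}$. Your bookkeeping of the prefactors and powers of $q$ checks out and reproduces \eqref{eq:9-4}.
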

Before proving Theorem \ref{t3-9} we need one auxiliary result.
\begin{lem}
For any nonnegative integer $n$, we have
\begin{equation}
_{3}\phi_{2}\left(\begin{matrix}q^{-2n},q^{2n+4},q\\
-q^{2},q^{3}
\end{matrix};q^{2},q^{2}\right)=\frac{q^{n}(1-q)}{1-q^{2n+2}}\sum_{j=-n}^{n+1}q^{j^{2}}.\label{eq:9-3}
\end{equation}
\end{lem}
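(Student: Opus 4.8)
The plan is to evaluate the series on the left of \eqref{eq:9-3} by transforming it, via the $_{3}\phi_{2}$ transformation \eqref{eq:1-1}, into the series already evaluated in \eqref{eq:9-1}. The key observation is that the two series share the same first two upper parameters $q^{-2n},q^{2n+4}$ and the same lower pair $-q^{2},q^{3}$; they differ only in the third upper parameter ($q$ versus $-q$) and in the argument ($q^{2}$ versus $1$). This is exactly the kind of discrepancy that \eqref{eq:1-1} is built to reconcile, so the whole lemma should reduce to a bookkeeping exercise once the parameters are chosen correctly.

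Concretely, first I would replace $q$ by $q^{2}$ in \eqref{eq:1-1} and then specialize $a=q^{2n+4},\,b=-q,\,d=-q^{2},\,e=q^{3}$. With these choices the argument on the left becomes $\frac{deq^{2n}}{ab}=\frac{(-q^{2})(q^{3})q^{2n}}{q^{2n+4}(-q)}=1$, so the left-hand $_{3}\phi_{2}$ is precisely the series appearing in \eqref{eq:9-1}; meanwhile the transformed upper entry $d/b=q$ and lower entry $aq^{2-2n}/e=q^{3}$ show that the right-hand $_{3}\phi_{2}$ (whose argument is $q^{2}$) is exactly the series we want. Thus \eqref{eq:1-1} gives
\[
_{3}\phi_{2}\left(\begin{matrix}q^{-2n},q^{2n+4},-q\\ -q^{2},q^{3}\end{matrix};q^{2},1\right)=\frac{(q^{-2n-1};q^{2})_{n}}{(q^{3};q^{2})_{n}}\,{}_{3}\phi_{2}\left(\begin{matrix}q^{-2n},q^{2n+4},q\\ -q^{2},q^{3}\end{matrix};q^{2},q^{2}\right).
\]

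Solving for the target series, I would then simplify the prefactor using the reversal identity $(q^{-2n-1};q^{2})_{n}=(-1)^{n}q^{-n^{2}-2n}(q^{3};q^{2})_{n}$, which follows by writing $1-q^{2k-2n-1}=-q^{2k-2n-1}\bigl(1-q^{2(n-k)+1}\bigr)$ for $0\le k\le n-1$ and reindexing the resulting product; this yields $\frac{(q^{3};q^{2})_{n}}{(q^{-2n-1};q^{2})_{n}}=(-1)^{n}q^{n^{2}+2n}$. Substituting the known value of the argument-$1$ series from \eqref{eq:9-1} and collecting the powers of $q$ and the signs then produces
\[
\frac{q^{n}(1-q)}{1-q^{2n+2}}\sum_{j=-n}^{n+1}q^{j^{2}},
\]
which is the right-hand side of \eqref{eq:9-3}.

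The one genuinely delicate point is the choice of the four free parameters in \eqref{eq:1-1}: they must be tuned so that the argument collapses to $1$ (to match \eqref{eq:9-1}) while the transformed upper and lower entries reproduce exactly the $(q;-q^{2},q^{3})$ profile of the target. After that is arranged, the only remaining care lies in the accounting of the reversed $q$-shifted factorial $(q^{-2n-1};q^{2})_{n}$, where the sign $(-1)^{n}$ and the power $q^{-n^{2}-2n}$ must be tracked so that they cancel correctly against the factors $(-1)^{n}$ and $q^{-n^{2}-n}$ already present in \eqref{eq:9-1}.
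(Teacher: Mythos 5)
Your proposal is correct and follows exactly the paper's own route: the same specialization $a=q^{2n+4}$, $b=-q$, $d=-q^{2}$, $e=q^{3}$ of \eqref{eq:1-1} with $q\to q^{2}$, combined with \eqref{eq:9-1} and the same reversal identity $(q^{-2n-1};q^{2})_{n}=(-1)^{n}q^{-n(n+2)}(q^{3};q^{2})_{n}$. All the parameter and exponent bookkeeping checks out.
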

\begin{proof}
Replace $q$ by $q^{2}$ and then set $a=q^{2n+4},b=-q,d=-q^{2},e=q^{3}$
in \eqref{eq:1-1}. We have
\[
_{3}\phi_{2}\left(\begin{matrix}q^{-2n},q^{2n+4},-q\\
-q^{2},q^{3}
\end{matrix};q^{2},1\right)=\frac{(q^{-1-2n};q^{2})_{n}}{(q^{3};q^{2})_{n}}{}_{3}\phi_{2}\left(\begin{matrix}q^{-2n},q^{2n+4},q\\
-q^{2},q^{3}
\end{matrix};q^{2},q^{2}\right).
\]
Then \eqref{eq:9-3} follows by substituting \eqref{eq:9-1} into
the left-hand side of the above identity and applying the equation
$(q^{-1-2n};q^{2})_{n}=(-1)^{n}q^{-n(n+2)}(q^{3};q^{2})_{n}.$
\end{proof}
We are now ready to prove Theorem \ref{t3-9}.

\noindent{\it Proof of Theorem \ref{t3-9}.} Replacing $q$ by $q^{2}$
in \eqref{eq:1-3} and then putting $\alpha=q^{4},\beta=q,c=-q^{2},d=q^{3}$
we get
\[
\begin{aligned} & \frac{(q^{2},abq^{2};q^{2})_{\infty}}{(aq^{4},bq^{4};q^{2})_{\infty}}\text{}_{3}\phi_{2}\left(\begin{matrix}q^{2}/a,q^{2}/b,q\\
-q^{2},q^{3}
\end{matrix};q^{2},abq^{2}\right)\\
 & =\sum_{n=0}^{\infty}\frac{(1-q^{4n+4})(1-q^{2n+2})(q^{2}/a,q^{2}/b;q^{2})_{n}(-ab)^{n}q^{n(n+1)}}{(aq^{4},bq^{4};q^{2})_{n}}\\
 & \;\times\text{}_{3}\phi_{2}\left(\begin{matrix}q^{-2n},q^{2n+4},q\\
-q^{2},q^{3}
\end{matrix};q^{2},q^{2}\right).
\end{aligned}
\]
Then the identity \eqref{eq:9-4} follows quickly by sustituting \eqref{eq:9-3}
into the right-hand side of the above formula. \qed

Some Hecke-type identities associated with definite quadratic forms
can be derived from the identity \eqref{eq:9-4}. Take $(a,b)=(1,-1),(1,\pm q^{-1})$
and $(0,\pm1)$ in \eqref{eq:9-4}. We obtain
\begin{align}
\sum_{n=0}^{\infty}\frac{(-1)^{n}q^{2n}}{1-q^{2n+1}} & =\sum_{n=0}^{\infty}\sum_{j=-n}^{n+1}q^{n^{2}+2n+j^{2}},\nonumber \\
\sum_{n=0}^{\infty}\frac{(q;q^{2})_{n}q^{n}}{(-q^{2};q^{2})_{n}} & =\sum_{n=0}^{\infty}\sum_{j=-n}^{n+1}(-1)^{n}(1+q^{2n+2})q^{n^{2}+n+j^{2}},\nonumber \\
\sum_{n=0}^{\infty}\frac{(-q;q^{2})_{n+1}(-q)^{n}}{(1-q^{2n+1})(-q^{2};q^{2})_{n}} & =\sum_{n=0}^{\infty}\sum_{j=-n}^{n+1}(1+q^{2n+2})q^{n^{2}+n+j^{2}},\nonumber \\
\sum_{n=0}^{\infty}\frac{(-1)^{n}q{}^{n^{2}+3n}}{(1-q^{2n+1})(-q^{2};q^{2})_{n}} & =\sum_{n=0}^{\infty}\sum_{j=-n}^{n+1}(1+q^{2n+2})q^{2n^{2}+3n+j^{2}},\nonumber \\
\sum_{n=0}^{\infty}\frac{q^{n^{2}+3n}}{(1-q^{2n+1})(q^{2};q^{2})_{n}} & =\frac{(-q^{2};q^{2})_{\infty}}{(q^{2};q^{2})_{\infty}}\sum_{n=0}^{\infty}\sum_{j=-n}^{n+1}(-1)^{n}(1-q^{2n+2})q^{2n^{2}+3n+j^{2}}.\label{eq:5-5}
\end{align}

\begin{thm}
For $\max\{|aq^{2}|,|bq^{2}|,|ab|\}<1,$ we have
\begin{equation}
\begin{aligned} & \frac{(q,abq;q)_{\infty}}{(aq^{2},bq^{2};q)_{\infty}}\text{}_{2}\phi_{1}\left(\begin{matrix}q/a,q/b\\
-q
\end{matrix};q,ab\right)\\
 & =\sum_{n=0}^{\infty}\sum_{j=-n}^{n+1}(-1)^{j}\frac{(1-q^{2n+2})(q/a,q/b;q)_{n}(ab)^{n}q^{j^{2}}}{(aq^{2},bq^{2};q)_{n}}.
\end{aligned}
\label{eq:7-9}
\end{equation}
\end{thm}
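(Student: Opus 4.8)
The left-hand side is built from a ${}_2\phi_1$, so the natural tool is the ${}_2\phi_1$ transformation \eqref{eq:t2-9}. Matching the argument and the upper parameters of the left-hand series with the target, I would leave the formula's $a,b$ as $a,b$ and set $\alpha=q^{2}$, $c=-q$, $z=q^{-1}$. Then $\alpha abz/q=ab$, so the series on the left of \eqref{eq:t2-9} is precisely ${}_2\phi_1\!\left(\begin{smallmatrix}q/a,q/b\\ -q\end{smallmatrix};q,ab\right)$, and the convergence restriction $\max\{|\alpha a|,|\alpha b|,|\alpha abz/q|\}<1$ becomes exactly the hypothesis $\max\{|aq^{2}|,|bq^{2}|,|ab|\}<1$. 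Under these choices the outer factor on the right is $1-\alpha q^{2n}=1-q^{2n+2}$ and, since $qz=1$, the inner series is the terminating ${}_2\phi_1\!\left(\begin{smallmatrix}q^{-n},q^{n+2}\\ -q\end{smallmatrix};q,1\right)$.

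Everything then hinges on evaluating this inner series, for which I would use \eqref{eq:7-12} with $\alpha=q$ and $c=-1$; these make the upper parameters $q^{-n},q^{n+2}$ and the lower parameter $-q$ agree. The decisive simplification is that $(\alpha,q/c;q)_{j}=(q,-q;q)_{j}$ cancels against $(q,\alpha c;q)_{j}=(q,-q;q)_{j}$, while $(\alpha c)^{j}q^{j^{2}-j}=(-1)^{j}q^{j^{2}}$; hence the right side of \eqref{eq:7-12} collapses to $\tfrac{1}{1-q}\sum_{j=0}^{n}(1-q^{2j+1})(-1)^{j}q^{j^{2}}$. Writing $(1-q^{2j+1})(-1)^{j}q^{j^{2}}=(-1)^{j}q^{j^{2}}+(-1)^{j+1}q^{(j+1)^{2}}$, shifting $j\mapsto j+1$ in the second piece, and using that $(-1)^{j}q^{j^{2}}$ is even in $j$, this sum folds into $\tfrac{1}{1-q}\sum_{j=-n}^{n+1}(-1)^{j}q^{j^{2}}$. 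Solving \eqref{eq:7-12} for the inner series and using $(q;q)_{n}/(q^{2};q)_{n}=(1-q)/(1-q^{n+1})$ yields ${}_2\phi_1\!\left(\begin{smallmatrix}q^{-n},q^{n+2}\\ -q\end{smallmatrix};q,1\right)=\dfrac{(-1)^{n}}{(1-q^{n+1})\,q^{n(n+1)/2}}\sum_{j=-n}^{n+1}(-1)^{j}q^{j^{2}}$.

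Substituting this back into the right-hand side of the specialized \eqref{eq:t2-9}, the remainder is bookkeeping, which I expect to be the only real obstacle. The factors $(q^{2};q)_{n}$ and $(q;q)_{n}$ cancel; the signs and $q$-powers combine through $(-\alpha ab/q)^{n}(-1)^{n}=(ab)^{n}q^{n}$ and $q^{\binom{n}{2}}q^{-n(n+1)/2}=q^{-n}$ into a single $(ab)^{n}$; and the leftover $1-q^{n+1}$ recombines so that the summand acquires exactly the factors $(1-q^{2n+2})$, $(q/a,q/b;q)_{n}$ and $(aq^{2},bq^{2};q)_{n}$ of \eqref{eq:7-9}. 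The one point needing care is the prefactor: the specialization produces $\tfrac{(q^{3},abq;q)_{\infty}}{(aq^{2},bq^{2};q)_{\infty}}$ on the left rather than the $\tfrac{(q,abq;q)_{\infty}}{(aq^{2},bq^{2};q)_{\infty}}$ of the statement. The two differ by the scalar $(1-q)(1-q^{2})=(q;q)_{\infty}/(q^{3};q)_{\infty}$, and the identical scalar $\tfrac{1}{(1-q)(1-q^{2})}$ is produced on the right by the $(1-\alpha)=1-q^{2}$ together with the $\tfrac{1}{1-q}$ coming from the inner evaluation; these cancel, and \eqref{eq:7-9} follows.
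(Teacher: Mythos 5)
Your proposal is correct and follows essentially the same route as the paper: specialize \eqref{eq:t2-9} with $\alpha=q^{2}$, $c=-q$, $z=q^{-1}$, evaluate the terminating inner ${}_2\phi_1$ via \eqref{eq:7-12} with $\alpha=q$, $c=-1$ (the paper's equation \eqref{eq:7-8}), and cancel the common scalar $(1-q)(1-q^{2})$. All the bookkeeping you describe matches the paper's computation.
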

\begin{proof}
Taking $\alpha=q,c=-1$ in \eqref{eq:7-12} gives
\begin{equation}
(1-q^{n+1})q^{n(n+1)/2}{}_{2}\phi_{1}\left(\begin{matrix}q^{-n},q^{n+2}\\
-q
\end{matrix};q,1\right)=\sum_{j=-n}^{n+1}(-1)^{n+j}q^{j^{2}}.\label{eq:7-8}
\end{equation}
Setting $\alpha=q^{2},c=-q,z=q^{-1}$ in \eqref{eq:t2-9} yields
\begin{align*}
 & \frac{(q^{3},abq;q)_{\infty}}{(aq^{2},bq^{2};q)_{\infty}}\text{}_{2}\phi_{1}\left(\begin{matrix}q/a,q/b\\
-q
\end{matrix};q,ab\right)\\
 & =\sum_{n=0}^{\infty}\frac{(1-q^{2n+2})(1-q^{n+1})(q/a,q/b;q)_{n}(-ab)^{n}q^{n(n+1)/2}}{(1-q)(1-q^{2})(aq^{2},bq^{2};q)_{n}}\text{}_{2}\phi_{1}\left(\begin{matrix}q^{-n},q^{n+2}\\
-q
\end{matrix};q,1\right).
\end{align*}
Then the identity \eqref{eq:7-9} follows readily by substituting
\eqref{eq:7-8} into the right-hand side of the above formula.
\end{proof}
Certain Hecke-type identities associated with definite quadratic forms
can be obtained from the identity \eqref{eq:7-9}. Setting $(a,b)=(\pm1,0)$
and $(0,0)$ in \eqref{eq:7-9} we deduce
\begin{align*}
 & \sum_{n=0}^{\infty}\frac{(-1)^{n}q^{n(n+1)/2}}{(-q;q)_{n}}=\sum_{n=0}^{\infty}\sum_{j=-n}^{n+1}(-1)^{n+j}(1+q^{n+1})q^{n(n+1)/2+j^{2}},\\
 & \sum_{n=0}^{\infty}\frac{q^{n(n+1)/2}}{(q;q)_{n}}=\frac{(-q;q)_{\infty}}{(q;q)_{\infty}}\sum_{n=0}^{\infty}\sum_{j=-n}^{n+1}(-1)^{j}(1-q^{n+1})q^{n(n+1)/2+j^{2}},\\
 & \sum_{n=0}^{\infty}\frac{q^{n^{2}+n}}{(q^{2};q^{2})_{n}}=\frac{1}{(q;q)_{\infty}}\sum_{n=0}^{\infty}\sum_{j=-n}^{n+1}(-1)^{j}(1-q^{2n+2})q^{n^{2}+n+j^{2}}.
\end{align*}

\begin{thm}
For $\max\{|aq^{2}|,|bq^{2}|,|ab/q^{2}|\}<1,$ we have 
\begin{equation}
\begin{aligned} & \frac{(q^{2},ab;q^{2})_{\infty}}{(aq^{2},bq^{2};q^{2})_{\infty}}\text{}_{2}\phi_{1}\left(\begin{matrix}q^{2}/a,q^{2}/b\\
q
\end{matrix};q^{2},\frac{ab}{q^{2}}\right)\\
 & =\sum_{n=0}^{\infty}\sum_{j=-n}^{n}\frac{(1-q^{4n+2})(q^{2}/a,q^{2}/b;q^{2})_{n}(ab)^{n}}{(aq^{2},bq^{2};q^{2})_{n}}q^{2j^{2}+j-2n}.
\end{aligned}
\label{eq:7-11}
\end{equation}
\end{thm}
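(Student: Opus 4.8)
The plan is to specialize the $_2\phi_1$ transformation \eqref{eq:t2-9} in exactly the way \eqref{eq:2-2} was obtained, and then to supply the one new ingredient: a closed evaluation of the terminating $_2\phi_1$ that appears on the right. First I would replace $q$ by $q^2$ in \eqref{eq:t2-9} and set $\alpha=q^2$, $c=q$, $z=q^{-2}$. With these choices the argument $\alpha abz/q^2$ collapses to $ab/q^2$, the lower parameter becomes $q$, and the convergence condition $\max\{|\alpha a|,|\alpha b|,|\alpha abz/q^2|\}<1$ becomes precisely the stated hypothesis $\max\{|aq^2|,|bq^2|,|ab/q^2|\}<1$. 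The prefactor is $(\alpha q^2,\alpha ab/q^2;q^2)_\infty/(\alpha a,\alpha b;q^2)_\infty=(q^4,ab;q^2)_\infty/(aq^2,bq^2;q^2)_\infty$, and since $(q^4;q^2)_\infty=(q^2;q^2)_\infty/(1-q^2)$ this produces the left-hand side of \eqref{eq:7-11} together with a spurious factor $1/(1-q^2)$; that factor is matched termwise by the $1/(1-\alpha)=1/(1-q^2)$ sitting in the summation coefficient, so the two cancel, just as in the proof of \eqref{eq:2-2}. After the cancellation the inner series on the right is $_2\phi_1(q^{-2n},q^{2n+2};q;q^2,1)$, evaluated at the argument $q^2z=1$.

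The heart of the matter is therefore to evaluate this inner series. I would isolate it as an auxiliary lemma, in the spirit of \eqref{eq:1-6}, \eqref{eq:7-2}, and \eqref{eq:7-8}, claiming
\[
(-1)^n q^{n(n+1)}\,{}_2\phi_1\left(\begin{matrix}q^{-2n},q^{2n+2}\\ q\end{matrix};q^2,1\right)=\sum_{j=-n}^{n}q^{2j^2+j}.
\]
To prove it I would replace $q$ by $q^2$ in \eqref{eq:7-12} and set $\alpha=1$, $c=q$, which is exactly the device already used in this paper to pass from \eqref{eq:1-2} to \eqref{eq:1-6}. At $\alpha=1$ the upper parameter $\alpha q^{2n+2}$ becomes $q^{2n+2}$ and the lower parameter $\alpha c$ becomes $q$, so the left-hand side of \eqref{eq:7-12} is exactly $(-1)^n q^{n(n+1)}$ times our $_2\phi_1$, the prefactor $(\alpha q^2;q^2)_n/(q^2;q^2)_n$ reducing to $1$. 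On the right-hand side the apparent singularity $1/(1-\alpha)$ is \emph{removable}: regrouping $(1-\alpha q^{4j})(\alpha;q^2)_j/(1-\alpha)$ gives $1$ at $j=0$ and $(1-q^{4j})(q^2;q^2)_{j-1}$ for $j\geq1$, while the remaining factors reduce to $1/(q^2;q^2)_j$; together with $(\alpha c)^j=q^j$ the $j$-th summand simplifies to $(1+q^{2j})q^{2j^2-j}$ for $j\geq1$ and to $1$ for $j=0$, so the sum equals $1+\sum_{j=1}^{n}(q^{2j^2-j}+q^{2j^2+j})$. Finally $j\mapsto -j$ sends $\sum_{j=1}^n q^{2j^2-j}$ to $\sum_{j=-n}^{-1}q^{2j^2+j}$, and the three pieces combine into $\sum_{j=-n}^{n}q^{2j^2+j}$, proving the lemma.

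With the lemma in hand the theorem follows by substitution. Inserting the evaluation into the right-hand side of the specialized \eqref{eq:t2-9}, the power $q^{n(n-1)}$ carried by the summation coefficient combines with $q^{-n(n+1)}$ from the lemma to give $q^{-2n}$, and the sign $(-ab)^n$ combines with $(-1)^n$ to give $(ab)^n$; what remains is exactly $\sum_{n\geq0}\sum_{j=-n}^{n}\frac{(1-q^{4n+2})(q^2/a,q^2/b;q^2)_n(ab)^n}{(aq^2,bq^2;q^2)_n}q^{2j^2+j-2n}$, which is \eqref{eq:7-11}. The one genuinely delicate step is the lemma: evaluating $_2\phi_1(q^{-2n},q^{2n+2};q;q^2,1)$ forces $\alpha=1$ in \eqref{eq:7-12} — the well-poised upper parameter $\alpha q^{2n+2}$ must equal $q^{2n+2}$, leaving no freedom — so one must treat $1/(1-\alpha)$ as a removable singularity rather than a literal substitution. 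Every other step is a routine specialization mirroring the proof of \eqref{eq:2-2}.
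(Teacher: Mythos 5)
Your proposal is correct and follows essentially the same route as the paper: specialize \eqref{eq:t2-9} (with $q\mapsto q^2$, $\alpha=q^2$, $c=q$, $z=q^{-2}$) and evaluate the inner terminating ${}_2\phi_1$ via \eqref{eq:7-12} with $q\mapsto q^2$, $\alpha=1$, $c=q$, which is exactly the paper's \eqref{eq:7-10}. Your explicit treatment of the removable singularity at $\alpha=1$ is a welcome clarification of a step the paper leaves implicit, but it is the same argument.
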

\begin{proof}
Replacing $q$ by $q^{2}$ and then setting $\alpha=1,c=q$ in \eqref{eq:7-12}
gives
\begin{equation}
\begin{aligned} & (-1)^{n}q^{n(n+1)}{}_{2}\phi_{1}\left(\begin{matrix}q^{-2n},q^{2n+2}\\
q
\end{matrix};q^{2},1\right)\\
 & =1+\sum_{j=1}^{n}(1+q^{2j})q^{2j^{2}-j}=\sum_{j=-n}^{n}q^{2j^{2}+j}.
\end{aligned}
\label{eq:7-10}
\end{equation}
Replacing $q$ by $q^{2}$ and putting $\alpha=q^{2},c=q,z=q^{-2}$
in \eqref{eq:t2-9} yields
\begin{align*}
 & \frac{(q^{4},ab;q^{2})_{\infty}}{(aq^{2},bq^{2};q^{2})_{\infty}}\text{}_{2}\phi_{1}\left(\begin{matrix}q^{2}/a,q^{2}/b\\
q
\end{matrix};q^{2},\frac{ab}{q^{2}}\right)\\
 & =\sum_{n=0}^{\infty}\frac{(1-q^{4n+2})(q^{2}/a,q^{2}/b;q^{2})_{n}(-ab)^{n}q^{n^{2}-n}}{(1-q^{2})(aq^{2},bq^{2};q^{2})_{n}}\text{}_{2}\phi_{1}\left(\begin{matrix}q^{-2n},q^{2n+2}\\
q
\end{matrix};q^{2},1\right).
\end{align*}
Then the formula \eqref{eq:7-11} follows easily by substituting \eqref{eq:7-10}
into the right-hand side of the above identity.
\end{proof}
Some Hecke-type identities associated with definite quadratic forms
can be established from the formula \eqref{eq:7-11}. Let\textbf{
$(a,b)=(\pm1,0),(\pm q,0)$ }and\textbf{ $(0,0)$ }in \eqref{eq:7-11}.
We deduce\textbf{ }
\begin{align*}
 & \sum_{n=0}^{\infty}\frac{(-1)^{n}q^{n^{2}-n}}{(q;q^{2})_{n}}=\sum_{n=0}^{\infty}\sum_{j=-n}^{n}(-1)^{n}(1-q^{4n+2})q^{n^{2}-n+2j^{2}+j},\\
 & \sum_{n=0}^{\infty}\frac{(-q^{2};q^{2})_{n}q^{n^{2}-n}}{(q;q)_{2n}}=\frac{(-q^{2};q^{2})_{\infty}}{(q^{2};q^{2})_{\infty}}\sum_{n=0}^{\infty}\sum_{j=-n}^{n}(1-q^{4n+2})q^{n^{2}-n+2j^{2}+j},\\
 & \sum_{n=0}^{\infty}\frac{(-1)^{n}q^{n^{2}}}{(q^{2};q^{2})_{n}}=\frac{(q;q^{2})_{\infty}}{(q^{2};q^{2})_{\infty}}\sum_{n=0}^{\infty}\sum_{j=-n}^{n}(-1)^{n}(1+q^{2n+1})q^{n^{2}+2j^{2}+j},\\
 & \sum_{n=0}^{\infty}\frac{(-q;q^{2})_{n}q^{n^{2}}}{(q;q)_{2n}}=\frac{(-q;q^{2})_{\infty}}{(q^{2};q^{2})_{\infty}}\sum_{n=0}^{\infty}\sum_{j=-n}^{n}(1-q^{2n+1})q^{n^{2}+2j^{2}+j},
\end{align*}
and
\[
\sum_{n=0}^{\infty}\frac{q^{2n^{2}}}{(q;q)_{2n}}=\frac{1}{(q^{2};q^{2})_{\infty}}\sum_{n=0}^{\infty}\sum_{j=-n}^{n}(1-q^{4n+2})q^{2n^{2}+2j^{2}+j}.
\]

The third identity above gives a second Hecke-type series representation,
which is associated with definite quadratic forms, for the series
$\sum_{n=0}^{\infty}\frac{q^{n^{2}}}{(q^{2};q^{2})_{n}}$ if $q$
is replaced by $-q$ while Liu \cite[(4.12)]{L6} presented a Hecke-type
series representation for that series, which is also stated in the
first section.

\section{Applications}

In this section we will use some Hecke-type identities associated
with definite quadratic forms in Section \ref{sec:3} to establish
inequalities for certain partition functions.

A partition of a positive integer $n$ is a non-increasing sequence
$(\lambda_{1},\lambda_{2},\cdots,\lambda_{r})$ of positive integers
such that $\lambda_{1}+\lambda_{2}+\cdot\cdot\cdot+\lambda_{r}=n.$
Let $p(n)$ and $\mathrm{pod}(n)$ denote the number of partitions
of $n$ and the number of partitions of $n$ without repeated odd
parts respectively and let $p(0)=\mathrm{pod(0)=1}$. Then the generating
functions for $p(n)$ and $\mathrm{pod}(n)$ are given by
\[
\sum_{n=0}^{\infty}p(n)q^{n}=\frac{1}{(q;q)_{\infty}}
\]
and 
\begin{equation}
\sum_{n=0}^{\infty}\mathrm{pod}(n)q^{n}=\frac{(-q;q^{2})_{\infty}}{(q^{2};q^{2})_{\infty}}.\label{eq:5-9}
\end{equation}

An overpartition of a positive integer $n$ is a partition of $n$
where the first occurrence of each distinct part may be overlined.
Let $\overline{p}(n)$ denote the number of overpartitions of $n$
and set $\overline{p}(0)=1$. Then the generating function for this
partition function is given by
\begin{equation}
\sum_{n=0}^{\infty}\overline{p}(n)q^{n}=\frac{(-q;q)_{\infty}}{(q;q)_{\infty}}.\label{eq:5-6}
\end{equation}

We now employ \eqref{eq:5-8}, \eqref{eq:5-1}, \eqref{eq:5-2}, \eqref{eq:5-3},
\eqref{eq:5-4} and \eqref{eq:5-5} to prove the following inequalities
for these partition functions.
\begin{thm}
\label{t4-1} For any nonnegative integer $N$ we have
\begin{align}
 & \sum_{n=0}^{\left\lfloor \sqrt{N}\right\rfloor }\sum_{j=-n}^{n}(-1)^{n}\overline{p}(N-n^{2}-n-j^{2})\geq0,\nonumber \\
 & \sum_{n=0}^{\left\lfloor \sqrt{N/2}\right\rfloor }\sum_{j=-n}^{n}(-1)^{n}\mathrm{pod}(N-2n^{2}-2n-j^{2})\geq0,\nonumber \\
 & \sum_{n=0}^{\left\lfloor \sqrt{N/2}\right\rfloor }\sum_{j=-n}^{n}(-1)^{n}\bigg(\overline{p}\bigg(\frac{N-n-j^{2}}{2}-n^{2}\bigg)+\overline{p}\bigg(\frac{N-3n-j^{2}-1}{2}-n^{2}\bigg)\bigg)\geq0,\label{eq:4-1}\\
 & \sum_{n=0}^{\left\lfloor \sqrt{N/3}\right\rfloor }\sum_{j=-n}^{n}(-1)^{n}\bigg(p\bigg(\frac{N-3n^{2}-2n-j^{2}}{2}\bigg)+p\bigg(\frac{N-3n^{2}-4n-j^{2}-1}{2}\bigg)\bigg)\geq0,\nonumber \\
 & \sum_{n=0}^{\left\lfloor \sqrt{N/2}\right\rfloor }\sum_{j=-n}^{n+1}(-1)^{n}\bigg(\overline{p}\bigg(\frac{N-3n-j^{2}}{2}-n^{2}\bigg)-\overline{p}\bigg(\frac{N-5n-j^{2}-2}{2}-n^{2}\bigg)\bigg)\geq0,\nonumber 
\end{align}
and
\begin{equation}
\sum_{n=0}^{\left\lfloor \sqrt{N/2}\right\rfloor }\sum_{j=-n}^{n}(-1)^{j}\mathrm{(pod}(N-2n^{2}-n-j^{2})-\mathrm{pod}(N-2n^{2}-3n-j^{2}-1))\geq0,\label{eq:5-10}
\end{equation}
where $p(x)=\mathrm{pod}(x)=\overline{p}(x)=0$ if $x$ is not a non-negative
integer and $\left\lfloor y\right\rfloor $ denotes the integral part
of a real number $y$.
\end{thm}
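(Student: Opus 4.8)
The plan is to derive each of the six inequalities in Theorem \ref{t4-1} as a direct consequence of the corresponding Hecke-type identity listed in the statement, using the fact that all the partition functions involved have \emph{nonnegative} coefficients. The unifying principle is that each Hecke-type identity equates a single-variable $q$-series (whose coefficients I will show to be nonnegative) with the product of a partition generating function and a double sum $\sum_{n}\sum_{j}(-1)^{\epsilon}q^{Q(n,j)+\,(\text{linear})}$. Extracting the coefficient of $q^{N}$ on both sides then produces, on the product side, a finite alternating double sum of values of the relevant partition function, while on the series side it produces a coefficient that is manifestly $\geq 0$; equating the two yields the desired inequality. Thus the whole proof is a mechanical ``compare coefficients of $q^{N}$'' argument applied six times.

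Concretely, I would process the identities in the order \eqref{eq:5-8}, \eqref{eq:5-1}, \eqref{eq:5-2}, \eqref{eq:5-3}, \eqref{eq:5-4}, then \eqref{eq:5-5} for the first five inequalities (together with \eqref{eq:4-1}), and finally a separate identity for \eqref{eq:5-10}. For each, the first step is to rewrite the right-hand side so that the partition generating function is visible: e.g. replacing $1/(q;q)_{\infty}$ by $\sum p(k)q^{k}$, the factor $(-q;q)_{\infty}/(q;q)_{\infty}$ by $\sum \overline{p}(k)q^{k}$ via \eqref{eq:5-6}, and $(-q;q^{2})_{\infty}/(q^{2};q^{2})_{\infty}$ by $\sum \mathrm{pod}(k)q^{k}$ via \eqref{eq:5-9}. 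The double sum then gets multiplied term-by-term into this generating function, and I read off the coefficient of $q^{N}$. The summation range $-n \leq j \leq n$ (or $-n\leq j\leq n+1$) together with the constraint that the partition-function argument be a nonnegative integer forces $n \leq \lfloor\sqrt{N}\rfloor$ (resp.\ $\lfloor\sqrt{N/2}\rfloor$, $\lfloor\sqrt{N/3}\rfloor$), which is exactly the outer summation bound appearing in each inequality; the factor $(1\pm q^{2n+1})$ or $(1\pm q^{2n+2})$ is what produces the two partition terms with shifted arguments inside each summand.

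The second step for each case is to verify that the \emph{left-hand side} of the identity is a power series in $q$ with nonnegative coefficients, since this is what guarantees the coefficient of $q^{N}$ is $\geq 0$. For identities whose left side is a basic hypergeometric-type sum such as $\sum_{n\geq 0}(-1)^{n}(-q;q^{2})_{n}q^{\cdots}/\bigl((1\pm q^{2n})(q^{2};q^{2})_{n}\bigr)$, the presence of the sign $(-1)^{n}$ means nonnegativity is not visually obvious, so I would either (i) expand $1/(1+q^{2n})=\sum_{k\geq 0}(-1)^{k}q^{2nk}$ and show cancellation leaves nonnegative coefficients, or (ii) more robustly, argue directly on the Hecke double-sum side. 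The cleanest route is to observe that after the manipulations leading to the identity, the left-hand series equals one of the generating functions $\sum p(n)q^{n}$, $\sum \overline{p}(n)q^{n}$, or $\sum \mathrm{pod}(n)q^{n}$ up to an explicitly nonnegative factor; in those cases nonnegativity of the $q^{N}$-coefficient is immediate. I expect that in each of the six identities the left-hand side simplifies (or can be rewritten) to such a form, and I would record that simplification explicitly.

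The main obstacle will be establishing the nonnegativity of the left-hand side in precisely those cases where it carries an alternating sign $(-1)^{n}$, for instance the series feeding \eqref{eq:4-1} and the two $\mathrm{pod}$-difference terms in \eqref{eq:5-10}, where the inequality asserts that an \emph{alternating difference} of partition values is nonnegative. For \eqref{eq:5-10} in particular, the right side is a \emph{difference} $\mathrm{pod}(N-2n^{2}-n-j^{2})-\mathrm{pod}(N-2n^{2}-3n-j^{2}-1)$, so I cannot merely invoke nonnegativity of coefficients; instead I must match this against a left-hand series whose $q^{N}$-coefficient is provably $\geq 0$, which forces me to identify the correct source identity (one producing the factor $(1-q^{2n+1})$ together with a $\mathrm{pod}$ generating function) and to check that its left-hand side has nonnegative coefficients despite the $(-1)^{j}$ weighting. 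The resolution is that the $(-1)^{j}$ on the Hecke side pairs with the generating-function side in such a way that the left-hand single series is a genuine nonnegative-coefficient $q$-series; once that is verified, comparing coefficients of $q^{N}$ delivers \eqref{eq:5-10}. I would therefore devote the bulk of the write-up to pinning down these nonnegativity verifications, treating the coefficient-extraction bookkeeping as routine.
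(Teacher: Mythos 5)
Your overall strategy is exactly the paper's: substitute the partition generating functions \eqref{eq:5-6}, \eqref{eq:5-7}, \eqref{eq:5-9} into the infinite-product side of the relevant Hecke-type identity, extract the coefficient of $q^{N}$, and conclude from the nonnegativity of the coefficients of the single-series side. For the first five inequalities this goes through with no difficulty, and your worry there is misplaced: the source identities are \eqref{eq:5-1}, \eqref{eq:5-2}, \eqref{eq:5-3}, \eqref{eq:5-4} and \eqref{eq:5-5}, whose left-hand sides contain no alternating signs at all (the $(-1)^{n}$ lives entirely inside the Hecke double sum on the right, which is precisely where the alternating signs of the inequalities come from), so their coefficients are termwise nonnegative by inspection.

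The genuine gap is in your treatment of \eqref{eq:5-10}. First, you have not identified the source identity: it is obtained by replacing $q$ by $-q$ in \eqref{eq:5-8}, which turns the left side into $\sum_{n\geq0}q^{n^{2}+2n}/\bigl((1+q^{2n})(q^{2};q^{2})_{n}\bigr)$ and the right side into $\frac{(-q;q^{2})_{\infty}}{2(q^{2};q^{2})_{\infty}}\sum_{n}\sum_{j}(-1)^{j}(1-q^{2n+1})q^{2n^{2}+n+j^{2}}$, exactly matching the $(-1)^{j}$-weighted pod differences in \eqref{eq:5-10}. Second, your proposed resolution --- that ``the $(-1)^{j}$ on the Hecke side pairs with the generating-function side'' to make the single series nonnegative --- is not a valid argument: nonnegativity of the single-series side must be established on its own, independently of what sits on the other side of the identity. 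The actual verification is an elementary algebraic step you did not supply: for $n\geq1$ one cancels the factor $1-q^{2n}$ of $(q^{2};q^{2})_{n}$ against $1+q^{2n}$ to get
\begin{equation*}
\sum_{n=0}^{\infty}\frac{q^{n^{2}+2n}}{(1+q^{2n})(q^{2};q^{2})_{n}}=\frac{1}{2}+\sum_{n=1}^{\infty}\frac{q^{n^{2}+2n}}{(1-q^{4n})(q^{2};q^{2})_{n-1}},
\end{equation*}
whose coefficients are visibly nonnegative; your alternative suggestion of expanding $1/(1+q^{2n})$ as an alternating geometric series and ``showing cancellation'' would leave you with a nontrivial cancellation argument that this one-line simplification avoids. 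Without the $q\mapsto-q$ step and this cancellation, your proof of \eqref{eq:5-10} does not close.
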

\begin{proof}
As the proofs of the first five inequalities are similar, we only
give the proofs of the inequalities \eqref{eq:4-1} and \eqref{eq:5-10}
here.

We first prove the inequality \eqref{eq:4-1}. It follows from \eqref{eq:5-6}
that 
\begin{equation}
\sum_{n=0}^{\infty}\overline{p}(n)q^{2n}=\frac{(-q^{2};q^{2})_{\infty}}{(q^{2};q^{2})_{\infty}}.\label{eq:5-7}
\end{equation}
The right-hand side of \eqref{eq:5-3} can be re-written as
\[
\frac{(-q^{2};q^{2})_{\infty}}{(q^{2};q^{2})_{\infty}}\sum_{n=0}^{\infty}\sum_{j=-n}^{n}(-1)^{n}(q^{2n^{2}+n+j^{2}}+q^{2n^{2}+3n+j^{2}+1}).
\]
Substituting \eqref{eq:5-7} into the above expression we conclude
that for each nonnegative integer $N$, the coefficient of $q^{N}$
on the right-hand side of \eqref{eq:5-3} is
\[
\sum_{n=0}^{\left\lfloor \sqrt{N/2}\right\rfloor }\sum_{j=-n}^{n}(-1)^{n}\bigg(\overline{p}\bigg(\frac{N-n-j^{2}}{2}-n^{2}\bigg)+\overline{p}\bigg(\frac{N-3n-j^{2}-1}{2}-n^{2}\bigg)\bigg).
\]
From the left-hand side of \eqref{eq:5-3} we find that the the coefficient
of $q^{N}$ for each nonnegative integer $N$ is nonnegative. Then
the result \eqref{eq:4-1} follows by comparing the coefficient of
$q^{N}$ on both sides of \eqref{eq:5-3}.

We now show the inequality \eqref{eq:5-10}. Replacing $q$ by $-q$
in \eqref{eq:5-8} yields

\begin{equation}
\sum_{n=0}^{\infty}\frac{q^{n^{2}+2n}}{(1+q^{2n})(q^{2};q^{2})_{n}}=\frac{(-q;q^{2})_{\infty}}{2(q^{2};q^{2})_{\infty}}\sum_{n=0}^{\infty}\sum_{j=-n}^{n}(-1)^{j}(1-q^{2n+1})q^{2n^{2}+n+j^{2}}.\label{eq:5-11}
\end{equation}
Since 
\[
\sum_{n=0}^{\infty}\frac{q^{n^{2}+2n}}{(1+q^{2n})(q^{2};q^{2})_{n}}=\frac{1}{2}+\sum_{n=1}^{\infty}\frac{q^{n^{2}+2n}}{(1-q^{4n})(q^{2};q^{2})_{n-1}},
\]
we see that for each nonnegative integer $N$, the coefficient of
$q^{N}$ on the left-hand side of \eqref{eq:5-11} is nonnegative.
Thus the result \eqref{eq:5-10} can be obtained by substituting \eqref{eq:5-9}
into the right-hand side of \eqref{eq:5-11} and then applying the
nonnegativity of the left-hand side of \eqref{eq:5-11}. This finishes
the proof of Theorem \ref{t4-1}.
\end{proof}
\begin{thm}
For any nonnegative integer $N$ we have
\begin{align}
 & \sum_{n=0}^{\left\lfloor N/2\right\rfloor }\sum_{j=-n}^{n}(-1)^{n}p(N-2n^{2}-2n-j^{2})\geq0,\label{eq:5-13}\\
 & \sum_{n=0}^{\left\lfloor N/2\right\rfloor }\sum_{j=-n}^{n}(-1)^{n}(p(N-2n^{2}-n-j^{2})+p(N-2n^{2}-3n-1-j^{2}))\geq0,\nonumber \\
 & \sum_{n=0}^{\left\lfloor N/3\right\rfloor }\sum_{j=-n}^{n}(-1)^{n}(p(N-3n^{2}-2n-j^{2})+p(N-3n^{2}-4n-1-j^{2}))\geq0,\nonumber \\
 & \sum_{n=0}^{\left\lfloor N/2\right\rfloor }\sum_{j=-n}^{n+1}(-1)^{n}(p(N-2n^{2}-3n-j^{2})-p(N-2n^{2}-5n-2-j^{2}))\geq0.\nonumber 
\end{align}
\end{thm}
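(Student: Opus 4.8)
The plan is to prove each of the four partition inequalities by the same coefficient-extraction strategy already used for \eqref{eq:4-1} and \eqref{eq:5-10} in the previous theorem, applying it to the appropriate Hecke-type identities derived earlier from \eqref{eq:7-13}, \eqref{eq:9-4}, and their relatives. The key observation is that each inequality records the statement that a certain series, written as a product $1/(q;q)_\infty$ times a double theta-like sum, has nonnegative $q$-expansion coefficients, and the nonnegativity is always inherited from the \emph{other} side of an identity whose series has manifestly nonnegative coefficients. So for each of the four claims I first identify the source identity among those displayed after Theorems \ref{t3-3} and \ref{t3-9} (and possibly after \eqref{eq:2-2}); the candidates are precisely those whose right-hand side carries the factor $1/(q;q)_\infty$ or $(-q^2;q^2)_\infty/(q^2;q^2)_\infty$ and a definite quadratic form of the shape $2n^2+\cdots+j^2$ or $3n^2+\cdots+j^2$ matching the exponents appearing in \eqref{eq:5-13} and its companions.

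For the first inequality \eqref{eq:5-13}, I expect the relevant source to be an identity whose right side equals $\frac{1}{(q;q)_\infty}\sum_{n,j}(-1)^n(1-q^{\cdots})q^{2n^2+2n+j^2}$; substituting $\sum_{k}p(k)q^{k}=1/(q;q)_\infty$ and collecting the coefficient of $q^{N}$ produces exactly the double sum $\sum_{n=0}^{\lfloor N/2\rfloor}\sum_{j=-n}^{n}(-1)^n p(N-2n^2-2n-j^2)$, where the summation range on $n$ comes from requiring $2n^2+2n+j^2\le N$, hence $n\le \lfloor N/2\rfloor$ after bounding $j$. The nonnegativity then follows because the left-hand side of that same identity is a series with nonnegative coefficients (a sum of terms of the form $q^{\text{exponent}}/(\text{positive }q\text{-product})$, or of the form already displayed with a visibly nonnegative expansion). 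The remaining three inequalities follow identically: match each to its defining identity, read off the quadratic form to fix the limits ($\lfloor N/2\rfloor$, $\lfloor N/3\rfloor$ according to whether the leading coefficient in $n$ is $2$ or $3$), extract the coefficient of $q^N$ via the generating function $\sum p(k)q^k=1/(q;q)_\infty$, and invoke the nonnegativity of the opposite side.

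The one genuinely delicate step, and the place I would spend the most care, is verifying the claimed nonnegativity of the analytic side in each case. In the proof of \eqref{eq:5-10} this required the nontrivial rewriting
\[
\sum_{n=0}^{\infty}\frac{q^{n^{2}+2n}}{(1+q^{2n})(q^{2};q^{2})_{n}}=\frac{1}{2}+\sum_{n=1}^{\infty}\frac{q^{n^{2}+2n}}{(1-q^{4n})(q^{2};q^{2})_{n-1}},
\]
which clears the apparently problematic factor $1+q^{2n}$ in the denominator (and the constant term $1/2$) to exhibit a bona fide nonnegative power series. I anticipate the same obstacle here: several of the source identities have left-hand sides with denominators such as $(1-q^{2n+1})$ or $(1+q^{2n})$ that are not a priori power series with nonnegative coefficients, so for each of the four inequalities I must either algebraically recast the summand into a form with an obviously positive denominator like $(q;q)_n$ or $(1-q^{4n})$, or argue termwise that each summand expands with nonnegative coefficients. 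This is the crux; once nonnegativity of one side is secured, the inequality is immediate from comparing coefficients of $q^N$.

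Finally, I would remark that, as in the previous theorem, the four proofs are entirely parallel, so it suffices to carry out one representative case in full detail — most naturally \eqref{eq:5-13}, since its quadratic form $2n^2+2n+j^2$ and single partition function $p$ make the bookkeeping cleanest — and then indicate that the other three are obtained \emph{mutatis mutandis} from their respective identities, with the only changes being the explicit quadratic form, the upper limit on $n$, and the possible appearance of a difference of two partition values (as in the last inequality, coming from a factor $1-q^{\cdots}$ rather than $1+q^{\cdots}$ on the product side).
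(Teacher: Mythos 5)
Your proposal is correct and follows essentially the same route as the paper: for \eqref{eq:5-13} the source is \eqref{eq:5-2}, whose prefactor $(-q;q^{2})_{\infty}/(q^{2};q^{2})_{\infty}$ is first normalized to $1/(q;q)_{\infty}$ by multiplying both sides by $(-q^{2};q^{2})_{\infty}$, after which coefficient extraction and the manifest nonnegativity of the $q$-series side (no rewriting is needed here, since $1/(1-q^{2n+1})$ already has nonnegative coefficients) give the inequality. The remaining three cases are handled identically from \eqref{eq:5-3}, \eqref{eq:5-4} and \eqref{eq:5-5}, exactly as you indicate.
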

\begin{proof}
As the proofs of these inequalities are similar, we only prove \eqref{eq:5-13}
here. Multiply both sides of \eqref{eq:5-2} by $(-q^{2};q^{2})_{\infty}$.
We have
\begin{equation}
(-q^{2};q^{2})_{\infty}\sum_{n=0}^{\infty}\frac{(-q;q^{2})_{n}q^{n^{2}+2n}}{(1-q^{2n+1})(q^{4};q^{4})_{n}}=\frac{1}{(q;q)_{\infty}}\sum_{n=0}^{\infty}\sum_{j=-n}^{n}(-1)^{n}q^{2n^{2}+2n+j^{2}}.\label{eq:5-12}
\end{equation}
It is easy to see that the left-hand side of \eqref{eq:5-12} has
nonnegative coefficients. Then the inequality \eqref{eq:5-13} follows
easily by equating the coefficients of $q^{N}$ on both sides of \eqref{eq:5-12}.
\end{proof}

\section*{Acknowledgement}

 This work was partially supported by the National Natural Science
Foundation of China (Grant No. 11801451).

\end{document}